\newtheorem{thm}{Theorem}
\newtheorem{prop}{Proposition}
\newtheorem{lem}{Lemma}
\newtheorem{cor}{Corollary}
\newtheorem*{mainthm}{Main Theorem}
\theoremstyle{definition}
\newtheorem{defn}{Definition}
\newcommand{\f}[1]{\mathbb{#1}}
\newcommand{\lr}{\longrightarrow}
\def\om{\omega}
\def\Om{\Omega}
\newcommand{\m}[1]{\mathcal{#1}}
\def\blfootnote{\xdef\@thefnmark{}\@footnotetext}
\title[Universal groups of intermediate growth and their IRS]{Universal groups of intermediate growth and their invariant random subgroups}
\author{Mustafa G\"okhan Benli, Rostislav Grigorchuk and  Tatiana Nagnibeda}
\address{Mustafa G\"okhan Benli\\  Middle East Technical University, Ankara, Turkey}
\email{benli@metu.edu.tr}
\address{Rostislav Grigorchuk\\  Texas A\&M University, College Station, TX, USA }
\email{grigorch@math.tamu.edu}
\address{Tatiana Nagnibeda\\  Section de math\'ematiques \\
Universit\'e de Gen\`eve \\
C.P.~64 \\
\newline
CH--1211 Gen\`eve~4 \\ Suisse.}
\email{tatiana.smirnova-nagnibeda@unige.ch}
\date{June 26, 2015}
\begin{document}
 
 \maketitle

\begin{center}
\begin{Small}
 \textit{To A.M. Vershik on the occasion of his 80-th birthday, with admiration and respect}
 \end{Small}
\end{center}

\begin{abstract}
We exhibit examples of groups of  intermediate  
growth  with $2^{\aleph_0}$   ergodic continuous  invariant  random subgroups. The examples are the universal groups associated with a family of groups of intermediate growth.
\end{abstract}
 
\section{Introduction}

\blfootnote{The first and second authors were supported by NSF Grant DMS-1207699. The second and the third author were supported by the Swiss National Science Foundation.}

The  goal  of  this  paper is  to  show the existence of groups of  intermediate  
growth  with $2^{\aleph_0}$  distinct ergodic continuous  invariant  random subgroups.

Invariant  random subgroup (abbreviated  $IRS$) is a convenient term that stands for a probability measure on the space of subgroups in a locally compact group, invariant under the action of the group by conjugation.
In the case of a  countable   group  $G$ (only such groups will be considered here), the space $S(G)$
of subgroups of $G$ is supplied with the topology induced
from the Tychonoff topology on $\{0,1\}^G$ where
a subgroup $H\le G$ is identified with its characteristic function 
$\chi_H(g)=1$ if $g\in H$ and $0$ otherwise. 

The delta mass corresponding to a normal subgroup is 
a trivial example of an $IRS$, as well as the average over a finite orbit of delta 
masses associated with groups in a finite conjugacy class. Hence, we are rather
interested in continuous invariant probability
measures on $S(G)$. Clearly, such a measure does not necessarily exist, for example if the group only has countably many subgroups. 

Given a countable group $G$, a basic question is whether a continuous $IRS$ exists. Ultimately one wants to describe 
the structure of the simplex of invariant probability measures
of the topological dynamical system $(Inn(G),S(G))$ where $Inn(G)$
is the group of inner automorphisms of $G$ acting on $S(G)$. Of particular interest are ergodic measures, i.e., the extremal points in the simplex.

A more general problem is the identification of the simplex of invariant
probability measures of the system $(\Phi,S(G))$ where $\Phi$ is a subgroup
of the group $Aut(G)$ of automorphisms of $G$ (see \cite{1201.3399,bowen,MR2952431}).
 A closely related problem is the study of invariant measures on the space of rooted Schreier
graphs of $G$, with $G$ acting by change of the root.
This point of view is presented in \cite{MR2893544,MR2931920}.

A very fruitful idea in the subject belongs to Anatoly Vershik who introduced the notion of a  totally  non-free  
action  of a locally compact group $G$ on a  space $X$  with  invariant measure $\mu$,  i.e., 
an action  with the property  that  different  points  $x\in X$ have  different  stabilizers  
$St_G(x)$  $\mu$-almost  surely.  Then  the map $St: X \to  S(G)$ defined by $x\mapsto  St_G(x)$  
is  injective $\mu$-almost surely   
and  the  image of $\mu$ under this map is the law of an  $IRS$  on $G$  which  is  continuous  and ergodic  whenever  $\mu$ is.
In \cite{MR2952431}, Vershik showed that a totally non-free action of a group $G$ provides us not only with an $IRS$ but also with a factor representation of $G$. He also realized the plan outlined above and described all the ergodic $Aut(G)$-invariant measures on $S(G)$ in the case when $G$ is the infinite symmetric group, see \cite{Ver11,MR2952431}.

Bowen showed in \cite{bowen} that non-abelian free groups of finite rank possess a whole \lq\lq zoo\rq\rq\ of ergodic continuous $IRS$, and that a big part of the simplex of $IRS$ on a free group $F_r$, $r\geq 2$, is a Poulsen simplex.
(A simplex is called a Poulsen if its extremal points are dense. It is unique up to affine 
isomorphism by \cite{MR500918}). As shown
in \cite{BGK},  already the so-called lamplighter group $\m{L}=\f{Z}_2 \wr \f{Z}$
(the "simplest" finitely generated group that has $2^{\aleph_0}$ subgroups)
has a Poulsen simplex of $IRS$. 
Given a surjection $\phi: G \twoheadrightarrow H$, there is a natural homeomorphism
$\tilde{\phi}: S(H)\rightarrow S(G,Ker(\phi))$ where $S(G,Ker(\phi))$ denotes the 
subspace of $S(G)$ consisting of subgroups of $G$ containing $Ker(\phi)$. This allows to lift any 
$IRS$ on $H$ to $G$ thus providing a large spectrum of $IRS$ on $G$ from that on $H$. This applies in particular to the free group $F_2$ that covers $\m{L}$.

A finitely generated virtually nilpotent group has only countably
many subgroups and therefore does not possess any continuous $IRS$. By  Gromov's theorem,
the class of finitely generated virtually nilpotent groups coincide with the class
of groups of polynomial growth. Recall that, given a finitely generated group $G$ with 
a system of generators $S$, one can consider its growth function $\gamma(n)=\gamma_{(G,S)}(n)$
which counts the number of elements of length at most $n$. The growth type of this function when 
$n \to \infty$ does not depend on the generating set $S$ and can be polynomial, exponential or intermediate. The 
question of existence
of groups of intermediate growth was raised by Milnor  \cite{Milnor68}
and was answered by the second author in \cite{MR764305}. The main construction
associates with every sequence $\om \in \Om=\{0,1,2\}^{\f{N}}$ a group
$G_\om$ generated by four involutions $a_\om,b_\om,c_\om,d_\om$ and if $\om$ is not 
an eventually constant sequence, then $G_\om$ has intermediate growth. Moreover, it was also
observed in \cite{MR764305} that the groups $G_\om$ fall into the class of just-infinite
branch groups. A  group  is  just  infinite if  
it  is  infinite  but every proper  quotient   is  finite.
A  group  is branch  if  it  has  a  faithful  level transitive action on a  spherically  
homogeneous rooted  tree  with  the  property  that rigid  stabilizers  of the levels of the tree   are  of  
finite  index, see Section 4.2 for precise definitions.
Just-infinite branch  groups constitute one  of  three  
classes  in which  the  class of  just-infinite groups  naturally  
splits  \cite{MR1765119}.  

Since the groups $G_\om$ are just-infinite, they only have countably
many quotients. This raised the question of existence of groups of intermediate
growth having $2^{\aleph_0}$ quotients, answered 
in \cite{MR781246}. The main idea was to take a suitable subset $\Lambda \subset \Omega$
of cardinality $2^{\aleph_0}$ and consider the group $U_\Lambda$ defined as the 
quotient of the free group $F_4$ by a normal subgroup $N$ which is the intersection of 
normal subgroups $N_\om,\om\in\Lambda$ where $G_\om =F_4/N_\om$. 
In  this  paper  we  explore  this  idea  further by  using  $IRS$ on $G_{\omega}$   
and  lift  them  to  $U_{\Lambda}$   deducing  the main  result. 
 
Branch groups give us the most transparent examples of  totally non-free actions and thus of $IRS$.
Indeed, as shown in \cite{MR1899368}, the natural extension of a branch action on a spherically homogeneous tree $T$ to its boundary $\partial T$ is totally non free with respect to the uniform probability measure on $\partial T$.  It is even completely non free, i.e.,  different  points  have different  stabilizers. The  uniform probability measure on $\partial T$ is ergodic and invariant.  The  groups  $G_{\omega}$   act on the binary  rooted  
tree  in  a  branch  way.  By lifting  the uniform measure  to  $S(G_{\omega})$   and  then   to  $S(U_{\Lambda})$, one obtains a host of $IRS$ on $U_\Lambda$. We then proceed to showing that the $IRS$ obtained in this way are distinct. These considerations allow us to prove our main theorem:

\begin{mainthm}
 There exists a finitely generated group of intermediate growth with $2^{\aleph_0}$ distinct continuous ergodic invariant random subgroups.
\end{mainthm}

We also  investigate  some  additional  properties  of  
 groups of the form $ U_{\Lambda},\;\Lambda\subset \Omega$,  including  
finite presentability,  branching  property and self-similarity.

\section{Space of Marked Groups and Universal groups}

\begin{defn}
  A \textit{k-marked group} is a pair $(G,S)$, where $G$ is a group and $S=(s_1,\ldots,s_k)$ is
  an ordered set of (not necessarily distinct) elements such that the set $\{s_1,
  \ldots,s_k\}$ generates the group $G$. The \textit{canonical map} between two $k-$ marked groups  
  $(G,S)$ and $(H,T)$
  is the map sending $s_i \mapsto t_i$ $i=1,2,\ldots,k$.
  If this map defines an epimorphism, it will be called the marked epimorphism and $(H,T)$
  will be called a marked image of $(G,S)$.
  Two $k$-marked groups $(G,S)$ and $(H,T)$ are equivalent
  if the canonical map  defines  an isomorphism between $G$ and $H$.
\end{defn}

The space of (equivalence classes of) $k$-marked groups will be denoted by $\m{M}_k$. 
This space has a natural topology, which for instance can be defined by the following  metric: Two $k$-marked 
groups $(G,S)$ and $(H,K)$ are of distance $2^{-m}$, where $m$ is the largest natural number such that the balls of 
radius $m$ of the 
Cayley graphs of $(G,S)$ and $(H,K)$ are isomorphic (as directed labeled graphs). In \cite{MR764305}
it was observed that this makes $\m{M}_k$
into a compact totally disconnected space.

Alternatively, this space can be defined in the following way: 
Let $F_k$ be a free group of rank $k$ with a basis $\{x_1,\ldots,x_k\}$.
Let $\m{N}_k$ denote the set of all normal subgroups of  $F_k$,  together with the topology
inherited from the power set $\m{P}(F_k)\cong \{0,1\}^{F_k}$ supplied with the Tychonoff topology. 
This topology has basis consisting of  sets of the form 
$\m{O}_{A,B}=\{N\vartriangleleft F_k \mid A\subset N\;,\; B\cap N= \varnothing \}$ where $A$ and $B$
are finite subsets of $F_k$. 
Given $(G,S)\in \m{M}_k$,  let $N_{(G,S)} \in \m{N}_k$ be the kernel of 
the natural map $\pi_{(G,S)}:F_k\rightarrow G$ sending $x_i\mapsto s_i$. 
This gives a homeomorphism between $\m{M}_k$ and $\m{N}_k$ (depending on the basis of $F_k$)
(See \cite{MR1760424}).
We will interchangeably use these two spaces.

\begin{defn}\label{defnofuniversalgp}
 Let $\m{C}=\{(G_i,S_i) \mid i\in I\}$ be a subset of $\m{M}_k$. Let $ N_{\m{C}}=\bigcap_{i\in I}N_{(G_i,S_i)}$.
 The \textit{universal group of the family} $\m{C}$ 
 is the $k$-marked group $(U_\m{C},S_\m{C})$ where $U_\m{C}=F_k / N_{\m{C}}$ and $S_\m{C}$ is the image of the basis  
 $\{x_1,\ldots,x_k\}$.
\end{defn}

 $U_\m{C}$ has the following universal property:
If $(H,T)$ is a marked group such that for all $i \in I$ the canonical map from $(H,T)$ to 
$(G_i,S_i)$ defines a group homomorphism, then the canonical map from $(H,T)$ to $(U_\m{C},S_\m{C})$
defines a group homomorphism.
 
 An alternative way to define the universal group is the following: 
 
 \begin{defn}\label{diaggp}
Given 
 $\m{C}=\{(G_i,S_i) \mid i\in I\}\subset \m{M}_k$, write $S_i=(s^i_1,\ldots,s^i_k)$. 
 Let $U^{diag}_{\m{C}}$ be the subgroup of the (unrestricted) direct product $\prod_{i\in I} G_i $ 
 generated
 by the elements $s_j=(s^i_j)_{i\in I}\;j=1,\ldots,k$. 
 The $k$-marked group $(U^{diag}_{\m{C}},S^{diag}_{\m{C}})$ is called the \emph{diagonal group}
 of the family $\m{C}$.
  \end{defn}
  
  It is straightforward to check that $(U^{diag}_{\m{C}},S^{diag}_{\m{C}})$
  equivalent (as a marked group) to the universal group $(U_\m{C},S_\m{C})$ of Definition \ref{defnofuniversalgp}.

 \begin{prop}\label{closure}
Let $\m{C}\subset \m{M}_k$. Then the marked groups $(U_\m{C},S_\m{C})$ and 
$(U_{\m{\overline C}},S_{\m{\overline C}})$ are equivalent, where ${\m{\overline C}}$ denotes the closure of $\m{C}$
in $\m{M}_k$.
 \end{prop}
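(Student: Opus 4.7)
The plan is to translate the statement into the space $\m{N}_k$ via the homeomorphism $\m{M}_k \cong \m{N}_k$ and then show that $N_\m{C} = N_{\overline{\m{C}}}$; since both marked groups are generated by the image of the same basis $\{x_1,\ldots,x_k\}$ of $F_k$, the equality of kernels will immediately give equivalence as marked groups.

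First I would observe the easy inclusion: since $\m{C}\subseteq \overline{\m{C}}$, the intersection defining $N_{\overline{\m{C}}}$ is over a larger family, so $N_{\overline{\m{C}}}\subseteq N_\m{C}$. The content is in the reverse inclusion. Here I would fix $w\in N_\m{C}$ and a point $(H,T)\in \overline{\m{C}}$, and exhibit a sequence $(G_{i_n},S_{i_n})\in \m{C}$ with $(G_{i_n},S_{i_n})\to (H,T)$ in $\m{M}_k$; this exists because $\m{M}_k$ is metrizable (indeed, by the metric recalled after Definition 1, $\m{M}_k$ is compact metrizable, hence first countable). Under the homeomorphism $\m{M}_k\to \m{N}_k$, this translates into $N_{(G_{i_n},S_{i_n})}\to N_{(H,T)}$ in the Tychonoff topology on $\{0,1\}^{F_k}$, i.e., the indicator functions converge pointwise. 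Since $w\in N_{(G_{i_n},S_{i_n})}$ for every $n$ (because $w\in N_\m{C}$), pointwise convergence at the coordinate $w$ forces $w\in N_{(H,T)}$. As $(H,T)$ was arbitrary in $\overline{\m{C}}$, we conclude $w\in N_{\overline{\m{C}}}$.

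Having established $N_\m{C} = N_{\overline{\m{C}}}$, I would finish by noting that $U_\m{C}=F_k/N_\m{C}$ and $U_{\overline{\m{C}}}=F_k/N_{\overline{\m{C}}}$ are literally the same quotient group, with $S_\m{C}$ and $S_{\overline{\m{C}}}$ both equal to the common image of $(x_1,\ldots,x_k)$. Hence the canonical map of marked groups is the identity and defines an isomorphism, which is exactly the definition of equivalence of $k$-marked groups.

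The only mildly subtle point is the reduction from the topological closure $\overline{\m{C}}$ to an approximating sequence, which is where first countability of $\m{M}_k$ (equivalently, of $\m{N}_k$, since $F_k$ is countable) is used; if one wanted to avoid sequences, one could argue directly with basic open sets $\m{O}_{\{w\},\varnothing}$ in $\m{N}_k$, which is closed and contains every $N_{(G_i,S_i)}$, hence contains the limit $N_{(H,T)}$ of any net in $\m{C}$ converging to $(H,T)$. I do not anticipate any real obstacle; the proposition is essentially a continuity statement for the map $\m{C}\mapsto N_\m{C}$ with respect to pointwise-closed intersections.
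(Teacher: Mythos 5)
Your proposal is correct and follows essentially the same route as the paper: the trivial inclusion $N_{\overline{\m{C}}}\subseteq N_{\m{C}}$, plus the observation that membership of a fixed word in the kernel is a closed (clopen) condition in $\m{N}_k$, so it passes to limits of sequences from $\m{C}$ approximating any point of $\overline{\m{C}}$. The paper phrases this as a proof by contradiction with a convergent sequence, while you argue directly (and note the net/clopen-set variant), but the underlying idea is identical, and your explicit remarks on first countability and on equality of kernels implying equivalence of marked groups only make the same argument more careful.
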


 \begin{proof}
  We need to show that $$ \bigcap_{(G,S)\in \m{C}} N_{(G,S)}= \bigcap_{(G,S)\in \m{\overline C}} N_{(G,S)}. $$
  Clearly, the right hand side is contained in the left. Suppose that some $g\in F_k$ belongs to the left
hand side but not to the right. Then there exists $(G,S)\in \m{C}$ such that $g \notin N_{(G,S)}$. Let 
$\{(G_n,S_n)\}_{n\ge 0}$ be a sequence in $\m{C}$ converging to $(G,S)$. Since $g$ belongs to the left hand side,
$g$ belongs to each $N_{(G_n,S_n)}$ and by definition of the topology in $\m{N}_k$, to $N_{(G,S)}$ which gives a 
contradiction.
 \end{proof}

 For an element $w\in F_k,w\neq 1$, denote $\m{O}_w=\{N \vartriangleleft F_k \mid w\in N\}$.
 
 \begin{lem}\label{cover}
   Let $H\le F_k$ be a subgroup and  $w_1,\ldots,w_m \in H,\;w_i\neq 1$ for each $i$.
   Then there exists  $w\in H,w\neq1$
   such that $\bigcup_{i=1}^m \m{O}_{w_i}\subset \m{O}_w$.
 \end{lem}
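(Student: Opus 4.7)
The plan is to reinterpret the set-theoretic containment $\m{O}_{w_i}\subset \m{O}_w$ algebraically: since $\m{O}_w$ is the set of normal subgroups of $F_k$ that contain $w$, the containment is equivalent to $w\in \gpn{w_i}$. So the goal reduces to producing $w\in H\setminus\{1\}$ lying in the intersection $\bigcap_{i=1}^m \gpn{w_i}$ of the normal closures of the $w_i$ in $F_k$.

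I would proceed by induction on $m$. The base case $m=1$ is trivial (take $w=w_1$). For the inductive step, having found a nontrivial $w'\in H$ in $\bigcap_{i<m}\gpn{w_i}$, the natural candidate is the commutator $w=[w',w_m]$. It obviously lies in $H$, and using the two standard one-term rearrangements of a commutator — first writing $[w',w_m]=\bigl((w')^{-1}w_m^{-1}w'\bigr)\cdot w_m$, then $[w',w_m]=(w')^{-1}\cdot\bigl(w_m^{-1}w'w_m\bigr)$ — one sees simultaneously that $w\in\gpn{w_m}$ and that $w\in\gpn{w'}\subset \gpn{w_i}$ for each $i<m$.

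The main obstacle is the degenerate possibility $[w',w_m]=1$. Here I would invoke the classical fact that centralizers in a free group are cyclic: two commuting elements of $F_k$ are joint powers of a common element, so there exist $v\in F_k$ and nonzero integers $a,b$ with $w'=v^a$ and $w_m=v^b$. In this case I replace the commutator by $w:=(w')^b=(w_m)^a$. As a power of $w'$ it lies in $H$ and in every normal subgroup containing $w'$, hence in each $\gpn{w_i}$ for $i<m$; as a power of $w_m$ it lies in $\gp{w_m}\subset \gpn{w_m}$; and it is nontrivial since $v\neq 1$ and $ab\neq 0$. This handles the degenerate case and completes the induction.
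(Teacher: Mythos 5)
Your proof is correct and follows essentially the same route as the paper: an induction on $m$ in which two elements are merged into one nontrivial element of $H$, using the commutator when they do not commute and a common power (via cyclic centralizers in a free group) when they do. The only differences are cosmetic — you phrase the containment $\m{O}_{w_i}\subset\m{O}_w$ explicitly via normal closures and merge the inductively obtained element with $w_m$, whereas the paper merges $w_1,w_2$ and then recurses — and you spell out the membership in $H$ and nontriviality checks that the paper leaves implicit.
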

 
 \begin{proof}
  By induction on $m$. The case $m=1$ is clear, one can take $w=w_1$. So, assume $m>1$. 
  
  \textit{Case 1:} $[w_1,w_2]=1$ in $F_k$. In this case there exists 
  $w\in F_k$  and $s,t \in \mathbb{Z}$ such that $w_1^s=w_2^t=w$ (all non-trivial abelian subgroups in a free group are cyclic).
  Therefore, $\m{O}_{w_1} \cup \m{O}_{w_2} \subset \m{O}_w$ and hence we can apply the induction hypothesis by 
  replacing $\m{O}_{w_1}$ and $\m{O}_{w_2}$ by $\m{O}_w$.
  
  \textit{Case 2:} $[w_1,w_2]\neq 1$ in $F_k$. In this case we can replace $\m{O}_{w_1}$ and $\m{O}_{w_2}$
  by $\m{O}_{[w_1,w_2]}$ and apply the induction hypothesis.

 \end{proof}

 \begin{prop}\label{nf}
  Let $\m{C}\subset \m{M}_k$ be a closed subset and assume that no group in $\m{C}$ contains a nonabelian free subgroup.
  Then the universal group $U_{\m{C}}$ also has no nonabelian free subgroups.
 \end{prop}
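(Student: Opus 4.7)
The plan is to argue by contradiction, using compactness of $\mathcal{C}$ (as a closed subset of the compact space $\mathcal{M}_k$) together with Lemma~\ref{cover} to transfer a hypothetical free subgroup in $U_{\mathcal{C}}$ to one in a single member of $\mathcal{C}$.

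Suppose $U_{\mathcal{C}}$ contains a nonabelian free subgroup with free generators $\bar u, \bar v$. Lift them to $u,v \in F_k$. Since $\bar u, \bar v$ are free generators in the quotient $U_{\mathcal{C}} = F_k/N_{\mathcal{C}}$, the subgroup $H := \gp{u,v} \le F_k$ must itself be free of rank $2$ (otherwise $u$ and $v$ generate a cyclic, hence abelian, subgroup of $F_k$ and the same holds in $U_{\mathcal{C}}$). Moreover every nontrivial $w \in H$ is a nontrivial reduced word in $u,v$, so its image in $U_{\mathcal{C}}$ is nontrivial, i.e.\ $w \notin N_{\mathcal{C}}$.

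For each nontrivial $w \in H$, set $F_w := \mathcal{C} \setminus \mathcal{O}_w = \{ N \in \mathcal{C} \mid w \notin N\}$, which is a closed subset of $\mathcal{C}$. The key claim is that the family $\{F_w\}_{w \in H \setminus \{1\}}$ has the finite intersection property. Given finitely many $w_1,\ldots,w_m \in H \setminus\{1\}$, Lemma~\ref{cover} supplies a nontrivial $w \in H$ with $\bigcup_{i=1}^m \mathcal{O}_{w_i} \subset \mathcal{O}_w$, hence
\[
F_w \;=\; \mathcal{C}\setminus\mathcal{O}_w \;\subset\; \bigcap_{i=1}^m \bigl(\mathcal{C}\setminus\mathcal{O}_{w_i}\bigr) \;=\; \bigcap_{i=1}^m F_{w_i}.
\]
Since $w \notin N_{\mathcal{C}} = \bigcap_{(G,S) \in \mathcal{C}} N_{(G,S)}$, there exists $(G,S) \in \mathcal{C}$ with $w \notin N_{(G,S)}$, so $F_w \neq \varnothing$, and thus $\bigcap_{i=1}^m F_{w_i} \neq \varnothing$.

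By compactness of $\mathcal{C}$, the full intersection $\bigcap_{w \in H \setminus \{1\}} F_w$ is then nonempty. Any $(G,S)$ in this intersection has the property that no nontrivial element of $H = \gp{u,v}$ lies in $N_{(G,S)}$, i.e.\ the images of $u$ and $v$ in $G$ generate a free subgroup of rank $2$, contradicting the hypothesis on $\mathcal{C}$. The main subtlety is making the compactness step work: this is exactly what Lemma~\ref{cover} is designed to provide, since the collection of conditions $\{w \notin N\}$ is infinite but, restricted to a finitely generated subgroup $H$, can always be compressed to a single such condition.
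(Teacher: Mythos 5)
Your proposal is correct and rests on exactly the same ingredients as the paper's proof: compactness of the closed set $\mathcal{C}\subset\mathcal{M}_k$ together with Lemma~\ref{cover} applied to the subgroup $H=\langle u,v\rangle$ of $F_k$ generated by lifts of the two elements. The only difference is presentational: the paper argues directly, using the hypothesis to produce for each $(G_i,S_i)$ a nontrivial $w_i\in H\cap N_{(G_i,S_i)}$, extracting a finite subcover from $\{\mathcal{O}_{w_i}\}$ and compressing it via Lemma~\ref{cover} to a single nontrivial $w\in H$ with $w=1$ in $U_{\mathcal{C}}$, whereas you run the contrapositive of the same compactness argument through the finite intersection property of the closed complements $\mathcal{C}\setminus\mathcal{O}_w$, landing a copy of the free group inside a single member of $\mathcal{C}$ (your write-up is, if anything, slightly more explicit than the paper about why the resulting relation contradicts freeness).
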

 
 \begin{proof}Let $\m{C}=\{(G_i,S_i) \mid i\in I\}$.
  Let $a,b \in U_{\m{C}}$ be two distinct elements, given as words in the generators $S_C$.
  Let $w_a,w_b \in F_k$ such that $\pi_{(G,S)}(w_a)=a$ and $\pi_{(G,S)}(w_b)=b$.
  For each $i\in I$, since 
  $G_i$ has no (non-abelian) free subgroups, there is nontrivial $w_i \in \left< w_a,w_b\right> \le F_k$ such that   
  $\pi_{(G_i,S_i)}(w_i)=1$, i.e., $w_i \in N_{(G_i,S_i)}$. Hence $\{\m{O}_{w_i}\}_{i\in I}$ 
  is an open cover of $\m{C}$. Since $\m{C}$ is compact, there is  a finite 
  subcover  $\m{O}_{w_1},\ldots,\m{O}_{w_n}$.  By Lemma \ref{cover}, there exists non-trivial $w\in \left< w_a,w_b\right>$ such that 
  $\m{C}\subset \m{O}_w$. This shows that $w=1$ in $U_{\m{C}}$.
 \end{proof}

 \section{Grigorchuk 2-groups\footnote{
 The first and the third authors insist on using this standard terminology.}}\label{grig2gps}

 We recall  here the construction of \cite{MR764305}. Note that in the 
 original construction in \cite{MR764305} the groups are defined as measure 
 preserving transformations of the unit interval. Here we will 
 define them  as groups of automorphisms of the binary rooted tree.

Let $\Omega=\{0,1,2\}^{\f{N}}$ be the space of infinite sequences $\om=\om_1\om_2\ldots\om_n\ldots$ where 
$\omega_i\in \{0,1,2\}$,
considered with its natural product topology.  Let $\tau$ be the shift transformation, i.e., if $\om=\om_1\om_2\ldots \in \Om$ then
$\tau\om=\om_2\om_3\ldots$. Let $T$ be the  binary rooted  tree whose vertices are identified with the set of all
finite binary words $\{0,1\}^*$ and edges defined in standard way: $E=\{(w,wx) \mid w\in\{0,1\}^\ast,x\in\{0,1\} \}$.
For  each $\om \in \Om$, consider the  automorphisms $\{a,b_\om,c_\om,d_\om\}$ of $T$ defined
recursively as follows:

For  $v\in\{0,1\}^*$
$$a(0v)=1v \;\text{and} \; a(1v)=0v  $$

$$\begin{array}{llllll}
 b_\om(0v)=& 0 \beta(\om_1)(v) &   c_\om(0v)=& 0 \zeta(\om_1)(v)
  &  d_\om(0v)= &0 \delta(\om_1)(v) \\

   b_\om(1v)=& 1 b_{\tau \om}(v) &      c_\om(1v)= & 1 c_{\tau \om}(v)
   & d_\om(1v)= & 1 d_{\tau \om}(v), \\

\end{array}
$$
where
$$
\begin{array}{ccc}
 \beta(0)=a & \zeta(0)=a & \delta(0)=e \\
  \beta(1)=a & \zeta(1)=e & \delta(1)=a \\
   \beta(2)=e & \zeta(2)=a & \delta(2)=a \\
\end{array}
$$
and $e$ denotes the identity automorphism of $T$. 

For each $\om\in \Om$, let $G_\om$ be the subgroup of $Aut(T)$ generated by the set $S_\om=\{a,b_\om,c_\om,d_\om\}$
so that  $\m{G}=\{(G_\om,S_\om)\mid \om \in \Om\}$ is a subset of $\m{M}_4$. In \cite{MR764305} it was
observed that if two sequences $\om,\eta \in \Om$ which are not eventually constant, have
long common beginning, then the $4$-marked groups $(G_\om,S_\om)$ and $(G_\eta,S_\eta)$ 
are close to each other in $\m{M}_4$. It was also observed that the groups $(G_\om,S_\om)$ for 
eventually constant sequences $\om$ are isolated in $\{(G_\om,S_\om)\mid \om \in \Om\}$.
Hence, removing these isolated points from this set and taking its closure in $\m{M}_4$, one obtains
a compact subset $\m{\tilde G}=\{(\tilde G_\om,\tilde S_\om)\mid \om \in \Om\}\subset \m{M}_4$ which is homeomorphic
to $\Omega$ (and hence to a Cantor set) via $\om \mapsto (\tilde G_\om, \tilde S_\om)$. Note that $(\tilde G_\om, \tilde S_\om)=(G_\om,S_\om)$
if and only if $\om$ is not eventually constant and $(\tilde G_\om,\tilde S_\om)=\lim_{n\to \infty} (G_{\om^{(n)}},S_{\om^{(n)}})$
when $\om$ is eventually constant and where $\{\om^{(n)}\}_{n\ge 0}$ is a sequence of not eventually constant elements in $\Om$ converging to 
 $\om$ (the limit does not depend on the choice of the sequence $\{\om^{(n)}\}_{n\ge 0}$ ). 
In other words, the families $\m{G}$ and $\m{\tilde G}$ differ only on countably many points.

Note that we have the following:  $N_{(\tilde G_\om,\tilde S_\om)}=N_{(G_\om,S_\om)}$ if $\om$ is not eventually constant
 and $N_{(\tilde G_\om,\tilde S_\om)}\subset N_{(G_\om,S_\om)}$ for  eventually constant $\om \in \Om$.

Let $\Omega_\infty$ be the set of sequences in $\Omega$ in which all three letters 
$\{0,1,2\}$ occur infinitely often and $\Omega_0$ be the set of 
eventually constant sequences. Regarding the groups in $\m{G}$ and $\m{\tilde G}$ the following are known:

\begin{thm}[\cite{MR764305}]\label{grig84} \mbox{}

\begin{enumerate}
\item All groups $G_{\omega},\;\om\in\Om$ are infinite residually finite groups.
 \item $G_{\omega}$ is virtually $\f{Z}^{2^n}$ if $\omega$ is becomes constant starting with $n$-th coordinate.
\item If $\om \notin \Omega_0$ then $G_{\omega}$ has intermediate growth between polynomial
and exponential.
\item If $\om \in \Om_0$ then $\tilde G_\om$ is virtually metabelian, infinitely presented
and has exponential growth.
\item If $\om \in \Omega_\infty$ then $G_{\omega}$ is a torsion 2-group.
\item If $\om \in \Omega_\infty$ then $G_{\omega}$ is just-infinite, i.e., all its nontrivial quotients are finite.
\item For $\om_1,\om_2 \in \Om_\infty$ we have $G_{\om_1}\cong G_{\om_2}$ if and only if $\om_1$
can be obtained from $\om_2$ by applying a permutation from $Sym(\{0,1,2\})$ letter by letter.
\end{enumerate}
\end{thm}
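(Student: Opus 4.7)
The foundation for all seven parts is the level-1 section decomposition: the stabilizer $\mathrm{St}_{G_\omega}(1)$ has index $2$ in $G_\omega$, and the recursion defining the generators yields an injective homomorphism
$$\psi_\omega:\mathrm{St}_{G_\omega}(1)\hookrightarrow G_{\tau\omega}\times G_{\tau\omega},\quad g\mapsto (g|_0,g|_1).$$
For (1), residual finiteness is immediate from the faithful level-preserving action on $T$: $G_\omega\hookrightarrow \varprojlim_n G_\omega/\mathrm{St}_{G_\omega}(n)$ with each quotient finite. Infiniteness follows either from the $\mathbb{Z}^{2^n}$ subgroup of (2) or, for $\omega\in\Omega_\infty$, from iterating $\psi_\omega$ to produce arbitrarily deep nontrivial sections.

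For (2) and (4), suppose $\omega_j=k\in\{0,1,2\}$ is constant for all $j\geq n$. Exactly one of $\beta,\zeta,\delta$ sends $k$ to $e$ while the other two send $k$ to $a$. Iterating $\psi$ down $n$ levels reduces two of the generators in $\{b_\omega,c_\omega,d_\omega\}$ to pairwise commuting copies of $a$ inside the level-$n$ stabilizer, exhibiting a finite-index $\mathbb{Z}^{2^n}$ and proving (2). For (4), $\tilde G_\omega$ is by construction a limit of $G_{\omega^{(m)}}$ along non-eventually-constant approximants, and so inherits extra branch structure: it contains a finite-index metabelian (but not abelian) subgroup. Exponential growth follows by exhibiting an explicit free subsemigroup in this metabelian part, and infinite presentability by observing that the section recursion forces new, independent relations at every level, ruling out any finite presentation.

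The central tool for (3), (5), (6) is \emph{length contraction} under $\psi_\omega$: for $g\in\mathrm{St}_{G_\omega}(1)$ of reduced length $|g|$ one has $|g|_0|+|g|_1|\leq \eta|g|+C$ for some $\eta<1$, the cancellations being guaranteed by the recursion and by the appearance of $e$ in $\beta(\omega_1),\zeta(\omega_1),\delta(\omega_1)$. When $\omega\in\Omega_\infty$ this contraction is uniform along all iterated shifts, so a straightforward induction on length, using that each generator is an involution, shows every element has $2$-power order, yielding (5). For (6), the branch property — the rigid level stabilizers $\mathrm{Rist}_{G_\omega}(n)$ are of finite index — combines with the standard argument that the normal closure of any nontrivial $g$ contains a nontrivial rigid vertex stabilizer, and hence some $\mathrm{Rist}_{G_\omega}(m)$, to force any nontrivial normal subgroup to have finite index.

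The main obstacle is the upper bound in (3). Iterating the length estimate morally gives a recursion $\gamma_{G_\omega}(n)\leq \gamma_{G_{\tau\omega}}(\eta n+C)^2$, and careful bookkeeping (over sufficiently many levels of the section tree) yields $\gamma_{G_\omega}(n)\leq \exp(n^\alpha)$ for some $\alpha<1$ whenever $\omega\notin\Omega_0$, so that contraction occurs infinitely often along the shifts. The superpolynomial lower bound is forced because $G_\omega$ cannot be virtually nilpotent: for $\omega\in\Omega_\infty$ it is infinite torsion, and otherwise its sections contain $\mathbb{Z}^{2^n}$ of unbounded rank; Gromov's theorem then supplies superpolynomial growth. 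Finally, for (7), permutations $\sigma\in \mathrm{Sym}(\{0,1,2\})$ applied letter-by-letter to $\omega$ induce tautological isomorphisms by relabeling $\{b,c,d\}$; the converse direction uses invariants read off algebraically — the unordered triple of non-$a$ involutions, together with the iterated section structure reconstructed from the canonical branch action — to recover $\omega$ up to $\mathrm{Sym}(\{0,1,2\})$.
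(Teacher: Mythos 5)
The paper does not reprove these facts at all: its ``proof'' is a list of citations (parts (1)--(3),(5) to \cite[Theorem 2.1]{MR764305}, (4) to \cite[Theorems 6.1, 6.2]{MR764305}, (6) to \cite[Theorem 8.1]{MR764305}, (7) to \cite[Theorem 2.10.13]{MR2162164}), so your sketch has to be judged against those original arguments. Parts of your outline are the standard route (residual finiteness from the faithful action on $T$, torsion for $\omega\in\Omega_\infty$ by induction on length using contraction, just-infiniteness from the branch property), but there are genuine gaps.

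The most serious is part (3). The claimed bound $\gamma_{G_\omega}(n)\le\exp(n^{\alpha})$ with $\alpha<1$ for \emph{every} $\omega\notin\Omega_0$ is false. A contraction coefficient $\eta<1$ uniform over all iterated shifts exists only when all three symbols recur with bounded gaps; this is exactly the hypothesis $\omega\in\Omega_M$ in Theorem \ref{upperbound}, which is the only form in which the present paper ever uses such a bound (and the whole point of Lemma \ref{upper} and the restriction to $\Omega_4$ in Section 5 is that no such uniform bound holds over $\Omega\setminus\Omega_0$). For general non--eventually constant $\omega$ one only has $|g_0|+|g_1|\le |g|+\mathrm{const}$, and for instance for $\omega=(01)^\infty$ the growth is known (Erschler) to be at least $\exp(n/\log^{2+\varepsilon}n)$, hence not $O(\exp(n^{\alpha}))$ for any $\alpha<1$, while still subexponential. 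What the 1984 argument actually proves in the general case is only $\lim_n\gamma_{G_\omega}(n)^{1/n}=1$, by accumulating the contraction available at the (infinitely many, but possibly very sparse) places where the symbol changes; your bookkeeping as described does not deliver this. The lower-bound half also has a flaw outside the torsion case: if $\omega\notin\Omega_0$ then no shift $\tau^n\omega$ is eventually constant, so the level-$n$ sections of $G_\omega$ are never the virtually $\mathbb{Z}^{2^n}$ groups of part (2); ruling out virtual nilpotence for $\omega\in(\Omega\setminus\Omega_0)\setminus\Omega_\infty$ needs a different argument (e.g.\ weak branchness plus an element of infinite order, whose existence when some symbol occurs only finitely often itself requires proof). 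Part (7) is only asserted in the nontrivial direction: recovering $\omega$ up to $Sym(\{0,1,2\})$ from the abstract group is precisely the content of \cite[Theorem 2.10.13]{MR2162164} and requires a rigidity statement forcing an abstract isomorphism to respect the tree structure; ``invariants read off algebraically'' is not an argument. Smaller inaccuracies: for a constant tail one of $b,c,d$ becomes trivial and the other two coincide (so the deep sections are infinite dihedral), not ``two generators become commuting copies of $a$''; in (6) the standard lemma places $\mathrm{Rist}_{G_\omega}(m)'$, not $\mathrm{Rist}_{G_\omega}(m)$, inside a nontrivial normal subgroup, so you still need these derived subgroups to have finite index; and in (4) both the exponential growth and the infinite presentability are asserted rather than proven.
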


\begin{proof}
 For proofs of (1),(2),(3) and (5) see \cite[Theorem 2.1]{MR764305}. (4) is proven in
 \cite[Theorem 6.1,6.2]{MR764305} and (6) in \cite[Theorem 8.1]{MR764305}. (7) is proven 
 in {\cite[Theorem 2.10.13]{MR2162164}}.
\end{proof}

\section{Some properties of the full universal group  $U$ } \label{fullunivgp}

Regarding the universal groups corresponding to the families $\m{G}$ and $\m{\tilde G}$ we have the following:

\begin{prop}
 $U_{\m{G}}=U_{\m{\tilde G}} $.
\end{prop}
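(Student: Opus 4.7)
The plan is to reduce $U_{\mathcal{G}} = U_{\mathcal{\tilde G}}$ to an equality of intersections of normal subgroups in $F_4$, then exploit Proposition \ref{closure} together with the relation $N_{(\tilde G_\om, \tilde S_\om)} \subset N_{(G_\om, S_\om)}$ noted just before the proposition.

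Split $\Omega = \Omega_0 \sqcup (\Omega \setminus \Omega_0)$ and set $\mathcal{G}' = \{(G_\om, S_\om) : \om \notin \Omega_0\}$. For $\om \notin \Omega_0$ the kernels coincide, $N_{(\tilde G_\om, \tilde S_\om)} = N_{(G_\om, S_\om)}$, so the subfamilies $\mathcal{G}'$ and $\{(\tilde G_\om, \tilde S_\om) : \om \notin \Omega_0\}$ determine the same intersection of normal subgroups in $F_4$. Moreover, by the description in Section \ref{grig2gps}, removing the isolated (eventually constant) points from $\mathcal{G}$ and closing up in $\mathcal{M}_4$ yields exactly $\mathcal{\tilde G}$. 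Thus $\mathcal{\tilde G} = \overline{\mathcal{G}'}$, and Proposition \ref{closure} applied to $\mathcal{G}'$ gives
\[
N_{\mathcal{G}'} \;=\; N_{\overline{\mathcal{G}'}} \;=\; N_{\mathcal{\tilde G}}.
\]

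It remains to see that adding the eventually constant groups $(G_\om, S_\om)$ for $\om \in \Omega_0$ back in does not shrink the intersection further, i.e.\ that $N_{\mathcal{G}'} \subset N_{(G_\om, S_\om)}$ for every $\om \in \Omega_0$. But this is immediate: by the inclusion recorded just before Theorem \ref{grig84}, $N_{(\tilde G_\om, \tilde S_\om)} \subset N_{(G_\om, S_\om)}$ for $\om \in \Omega_0$, while the previous paragraph shows $N_{\mathcal{G}'} \subset N_{(\tilde G_\om, \tilde S_\om)}$ (since $(\tilde G_\om, \tilde S_\om) \in \overline{\mathcal{G}'}$). Combining these,
\[
N_{\mathcal{G}} \;=\; N_{\mathcal{G}'} \cap \bigcap_{\om \in \Omega_0} N_{(G_\om, S_\om)} \;=\; N_{\mathcal{G}'} \;=\; N_{\mathcal{\tilde G}},
\]
which gives $U_{\mathcal{G}} = U_{\mathcal{\tilde G}}$.

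There is no real obstacle here; the statement is essentially a corollary of Proposition \ref{closure} together with the density of the non-eventually-constant parameters in $\mathcal{\tilde G}$. The only point that needs care is tracking the direction of the inclusion $N_{(\tilde G_\om, \tilde S_\om)} \subset N_{(G_\om, S_\om)}$ for $\om \in \Omega_0$, so that the eventually constant relations are already forced by the limits and do not add new constraints to the universal group.
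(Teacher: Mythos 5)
Your proof is correct and follows essentially the same route as the paper: both arguments rest on Proposition \ref{closure} applied to the dense family of non--eventually-constant parameters (using $\m{\tilde G}=\overline{\m{G}'}$ and $N_{(\tilde G_\om,\tilde S_\om)}=N_{(G_\om,S_\om)}$ off $\Om_0$), together with the inclusion $N_{(\tilde G_\om,\tilde S_\om)}\subset N_{(G_\om,S_\om)}$ to dispose of the eventually constant sequences. The only difference is organizational (you show the $\Om_0$-terms are redundant in $N_{\m{G}}$, while the paper verifies the two inclusions between $N_{\m{G}}$ and $N_{\m{\tilde G}}$ directly), which does not change the substance.
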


\begin{proof}
Referring to the notation of Definition \ref{defnofuniversalgp}, we need to show the following equality: 
$$N_{\m{G}}:=\bigcap_{\om \in \Om} N_{(G_\om,S_\om)}=N_{\m{\tilde G}}=:\bigcap_{\om \in \Om} N_{(\tilde{G}_\om,\tilde{S}_\om)}.$$
Since $N_{(\tilde G_\om,\tilde S_\om)}\subset N_{(G_\om,S_\om)}$ for each $\om \in \Om$,  the right-hand side of the above equation is contained in the left-hand side.
Since  $\{(\tilde G_\om,\tilde S_\om)\mid \om \in \Om\setminus \Om_0\}$ is dense in $\m{\tilde G}$, by 
Proposition \ref{closure} we have
$$N_{\m{\tilde G}}=\bigcap_{\om \in \Om\setminus\Om_0} N_{(\tilde{G}_\om,\tilde{S}_\om)}.$$ Therefore,
$$N_{\m{G}}\subset  \bigcap_{\om \in \Om\setminus\Om_0} N_{({G}_\om,{S}_\om)}=
\bigcap_{\om \in \Om\setminus\Om_0} N_{(\tilde{G}_\om,\tilde{S}_\om)}=N_{\m{\tilde G}}.$$

\end{proof}


We will use the notation $U=U_{\m{G}}$ for the full universal
group and denote by $S=\{a,b,c,d\}$ its canonical generators. Note that the basic relations 
$a^2=b^2=c^2=d^2=bcd=1$ hold in $U$.

\begin{thm}
  $U$ contains no nonabelian free subgroups, has uniformly exponential growth and is not finitely
 presented.
\end{thm}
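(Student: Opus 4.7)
My approach will handle the three parts of the statement separately, in each case transferring a known property from a carefully chosen quotient of $U$, using the universal-group machinery of Section~2 and the structural results collected in Theorem~\ref{grig84}.

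For the absence of nonabelian free subgroups, I plan to apply Proposition~\ref{nf} to the family $\m{\tilde G}\subset\m{M}_4$. It is closed in $\m{M}_4$, and by the preceding proposition its associated universal group is exactly $U$. By Theorem~\ref{grig84}, no $\tilde G_\om$ contains a nonabelian free subgroup: for $\om\notin\Om_0$, $\tilde G_\om=G_\om$ has intermediate (hence subexponential) growth by part~(3); for $\om\in\Om_0$, $\tilde G_\om$ is virtually metabelian by part~(4). Proposition~\ref{nf} then gives the result for $U$.

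For uniformly exponential growth, I will pull back the property from a virtually metabelian quotient. Pick any $\om\in\Om_0$; by Theorem~\ref{grig84}(4), $\tilde G_\om$ is virtually metabelian with exponential growth. Osin's theorem on uniform exponential growth for virtually solvable groups of exponential growth yields a constant $c>1$ such that $\gamma_{(\tilde G_\om,T)}(n)\geq c^n$ for every finite generating set $T$ of $\tilde G_\om$. Given any finite generating set $S$ of $U$, its image $\bar S$ generates $\tilde G_\om$, and the surjection $U\twoheadrightarrow\tilde G_\om$ gives $\gamma_{(U,S)}(n)\geq \gamma_{(\tilde G_\om,\bar S)}(n)\geq c^n$, so $U$ has uniformly exponential growth with the same constant.

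The subtlest part is non-finite-presentability, where I expect the main technical obstacle. Suppose for contradiction that $U=\gpr{a,b,c,d}{R}$ with $R$ finite, so that the kernel $N$ of $F_4\to U$ is just $\gpn{R}$. Fix any $\om\in\Om_0$; by Theorem~\ref{grig84}(4), $\tilde G_\om$ is infinitely presented, so $N_{(\tilde G_\om,\tilde S_\om)}\supseteq N$ is not finitely normally generated in $F_4$. This alone is compatible with $|R|<\infty$, since the ``extra'' relations of $\tilde G_\om$ over $U$ may genuinely lie outside $N$. The plan is to upgrade this observation by noting that $U$ itself is not isomorphic as a marked group to any member of $\m{\tilde G}$: its exponential growth rules out $\om\notin\Om_0$, and the existence of intermediate-growth just-infinite quotients $G_\om$ for $\om\in\Om_\infty$ prevents $U$ from being virtually metabelian. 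Thus $N$ lies strictly below every $N_{(\tilde G_\om,\tilde S_\om)}$. Leveraging the Cantor-set structure of $\m{\tilde G}$ together with the infinite presentation of each $\tilde G_\om$, one should produce, for each $n$, a relation in $N$ of length greater than $n$ that is not in the normal closure of $N\cap B_n(F_4)$, contradicting $N=\gpn{R}$. Making this last step quantitative, and producing the required long relations explicitly, is the principal obstacle.
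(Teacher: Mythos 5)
Your first two parts are correct and essentially coincide with the paper's argument: the absence of nonabelian free subgroups follows from Proposition \ref{nf} applied to the closed family $\m{\tilde G}$ (the paper phrases the hypothesis via amenability of all $\tilde G_\om$, you via subexponential growth and virtual metabelianity --- either suffices), and uniformly exponential growth is pulled back from a virtually metabelian quotient $\tilde G_\om$, $\om\in\Om_0$, of exponential growth via Osin's theorem, exactly as in the paper (which uses $\eta=000\ldots$).

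The third part has a genuine gap, and you have in effect flagged it yourself. Knowing that the kernel $N$ of $F_4\to U$ is properly contained in every $N_{(\tilde G_\om,\tilde S_\om)}$ and that the groups $\tilde G_\om$, $\om\in\Om_0$, are infinitely presented gives no leverage on its own: a finitely presented group (indeed $F_4$ itself) surjects onto plenty of infinitely presented groups, so no soft limit or compactness argument over the Cantor set $\m{\tilde G}$ can by itself produce, for every $n$, a relator of $U$ that is not a consequence of the relators of length at most $n$; your sketch stops exactly where the real work would begin, and it is not clear it can be completed along these lines. The paper sidesteps this entirely by quoting an external result (\cite[Theorem 1.10]{MR3061134}): any finitely presented group mapping onto one of the groups $G_\om$ is large, i.e.\ has a finite-index subgroup mapping onto a nonabelian free group, and in particular contains a nonabelian free subgroup. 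Since $U$ maps onto every $G_\om$ and, by the first part of the theorem, has no nonabelian free subgroups, it cannot be finitely presented. If you wish to avoid that citation, you would need genuinely new input --- for instance, a concrete analysis of which relations of $G_\om$ follow from a given finite set of them, in the spirit of the lifting-of-relations arguments used to prove that the groups $G_\om$ themselves are not finitely presented --- rather than the topological considerations you outline.
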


\begin{proof}
 Since all groups in $\m{\tilde G}$ are amenable (and hence cannot contain nonabelian free subgroups), the first assertion follows from Proposition \ref{nf}.
 By Theorem \ref{grig84} part (4), the group ${\tilde G}_\eta$ for $\eta=000\ldots$ 
 is an elementary amenable group of exponential growth, and hence of uniformly exponential growth
 by \cite{MR2110759}. Therefore $U$ has uniformly exponential growth.
 By \cite[Theorem 1.10]{MR3061134}, any finitely presented group mapping onto the groups $G_\om,\om\in\Om$
 must be large, i.e., has a finite index subgroup mapping onto a nonabelian free group. 
 In particular, such group contains a nonabelian free subgroup. Therefore $U$ 
 cannot be  finitely presented.
\end{proof}


\subsection{$U$ as an automaton group} \mbox{}

\smallskip
In this section we will realize $U$ as an automaton group and explore further properties. Firstly, 
we will recall some basics.
 
 Let $T_d$ denote the $d$-ary rooted tree with vertex set $\{0,1,2,\ldots,d-1\}^\ast$.
 For an automorphism $g\in Aut(T_d)$ and $x\in\{0,1,\ldots,d-1\}$, the \emph{section} of 
 $g$ at $x$
 (denoted by $g_x$) is the automorphism of $T_d$ defined uniquely by 
 $$g(xv)=g(x)g_x(v)\;\;\mbox{for all}\;\;v\in \{0,1,\ldots,d-1\} .$$
 This gives an isomorphism
$$\begin{array}{cccc}
   & Aut(T_d) & \rightarrow & S_d  \ltimes \left(Aut(T_d) \times \cdots \times Aut(T_d)\right ) \\
         &   g      &  \mapsto    & \sigma_g  (g_0,\ldots,g_{d-1})  
  \end{array}
$$
where $\sigma_g$ describes how $g$ permutes the first level subtrees and $g_i$ describe its action
within each subtree. (Here $S_d$ is the symmetric group on $d$ letters).

 \begin{defn}
  A subgroup $G\le Aut(T_d)$ is called self-similar if for all $g\in G$ and $x\in\{0,1,\ldots,d-1\}$,
  $g_x\in G$.
 \end{defn}
For an overview of self-similar groups and related topics we refer to \cite{MR2327317}.

A standard way to construct self-similar groups is to start with a list of symbols
$S=\{s^1,\ldots,s^m\}$ and permutations $\sigma_1,\ldots,\sigma_m \in S_d$ and consider the system
$$\begin{array}{ccc}

s^1&=&\sigma_1 (s^1_0,\ldots,s^1_{d-1}) \\
\vdots & \vdots & \vdots \\
s^m&=&\sigma_m (s^m_0,\ldots,s^m_{d-1} )\
  \end{array}
$$
where $s^i_j \in S $. If $\sigma_i = id$, we will omit writing it. Such a system defines a unique set of  $m$ automorphisms of $T_d$. 
Clearly the group $G=\left<S\right>$ will be self-similar.
Since in this case the generating set $S$ is closed under taking sections, the action of the
group can be modeled by a Mealy type automaton where each generator will correspond to a 
state of the automaton (see the figure below for an example).
Such groups, i.e., groups generated by the states
of a Mealy type automaton are called \emph{automata groups}. We refer to \cite{MR1841755} for a detailed account on automata groups.

 Consider the tree $T_6$ determined by the alphabet 
 $$\m{A}=\{0,1\} \times \{0,1,2\}=\{(0,0),(0,1),(0,2),(1,0),(1,1),(1,2)\}$$
 whose elements are enumerated as $0,1,\ldots,5$.
 Let $V\le Aut(T_6)$ be generated by the following elements:

 \begin{equation}\label{wreath}
\begin{array}{cccc}
A & = & (14)(25)(36) &(E,E,E,E,E,E)  \\
B & = & &(A,A,E,B,B,B)  \\
C & = & &(A,E,A,C,C,C)  \\
D & = & &(E,A,A,D,D,D) \\
 \end{array}
\end{equation} 
where, $(14)(25)(36)$ is an element of the symmetric group $S_6$ and $E$ corresponds to the
identity automorphism. Observe that $A^2=B^2=C^2=D^2=BCD=1$.
The corresponding automaton is as follows:
\begin{center}
\includegraphics[scale=0.54]{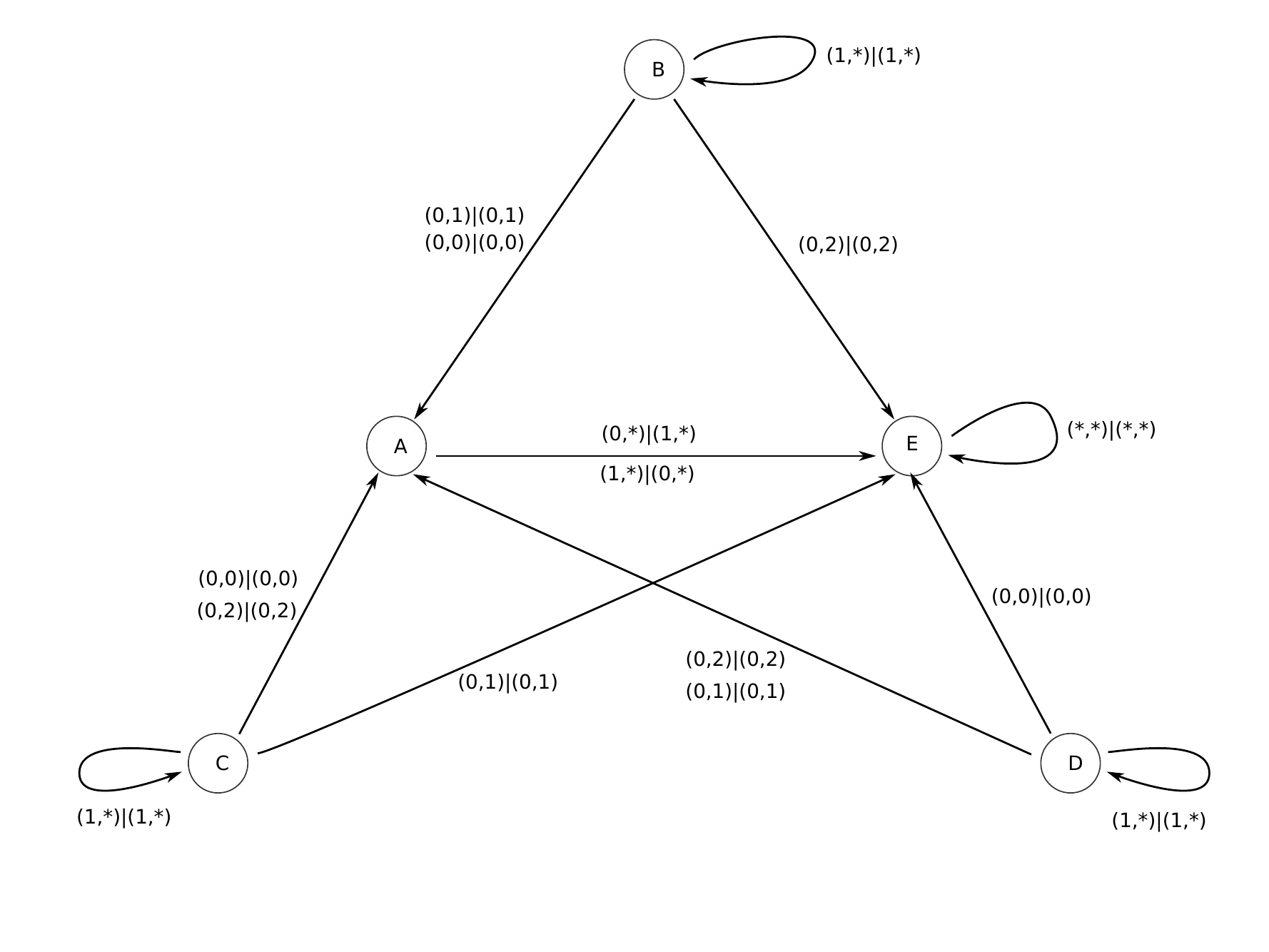} 
\end{center}

We will show that the group $V$ is isomorphic to $U$ (as a marked group).

Given $\om \in \Om$ and $u\in\{0,1\}^\ast$ let 
$\om^u\in \{0,1,2\}^\ast$ be the beginning of $\om$ of length $|u|$. Note that 
$$\om^{uv}=\om^u  \left(\tau^{|u|}\om\right)^v$$
for all $u,v\in\{0,1\}^\ast$

For any $\om\in\Om$ let $T_\om=\{(u,v)\in T_6 \mid u\in \{0,1\}^\ast\;,v=\om^u\}$. 
Clearly $T_\om$ is a binary 
subtree of $T_6$. Denote $\{0,1\}^\ast$ by $T_2$ and let $\phi_\om: T_\om \to T_2$ be  defined as 
$$\phi_\omega(u,v)=u$$ which clearly is
a bijection. For $(u,v)\in T_\om$ and $(u',v')\in T_{\tau^{|u|}\om}$ we have

\begin{equation}\label{sections}
 \phi_\om\left(uu',vv'\right)=\phi_\om(u,v)\phi_{\tau^{|u|}\om}(u',v') 
\end{equation}

\medskip

Given $g\in V$ and $\om  \in \Om$ define a group homomorphism $\psi_\om :V \to Aut(T_2) $ by 
$$\psi_\om(g)(u)=\phi_\om(g(u,\om^u))\;\;\text{for all}\;\; u \in T_2$$
It is straightforward to verify that $\psi_\om(g)\in Aut(T_2)$ and that $\psi_\om$ defines 
a group homomorphism.

\begin{lem}\label{sections_lem}
 For all $u\in T_2$ with $|u|=n$ 
 have $$\psi_\om(g)_u=\psi_{\tau^n\om}\left(g_{(u,\om^u)}\right) $$
\end{lem}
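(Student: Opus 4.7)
My plan is to verify the identity directly by reducing the section formula for $\psi_\om(g)$ on $T_2$ to the section formula for the self-similar action of $V$ on $T_6$. Recall that the section $h_u$ of $h \in Aut(T_2)$ at a vertex $u$ is the unique element of $Aut(T_2)$ satisfying $h(uu') = h(u)\, h_u(u')$ for all $u' \in T_2$, so it suffices to show that for every $u' \in T_2$,
$$\psi_\om(g)(uu') = \psi_\om(g)(u) \cdot \psi_{\tau^n\om}\bigl(g_{(u,\om^u)}\bigr)(u').$$

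For the left-hand side I would unfold the definition of $\psi_\om$ and then apply the section formula for $g$ acting on $T_6$ at the vertex $(u,\om^u)$. Using the identity $\om^{uu'} = \om^u (\tau^n\om)^{u'}$, this produces
$$\psi_\om(g)(uu') = \phi_\om\bigl(g(u,\om^u) \cdot g_{(u,\om^u)}(u',(\tau^n\om)^{u'})\bigr).$$
Next, invoking the $T_\om$-invariance of $V$, I would write $g(u,\om^u) = (w,\om^w)$ where by definition $w = \psi_\om(g)(u)$. Since $g_{(u,\om^u)} \in V$ by self-similarity and preserves $T_{\tau^n\om}$ (upon identifying the subtree of $T_6$ rooted at $(u,\om^u)$ with $T_6$ itself), I can likewise write $g_{(u,\om^u)}(u',(\tau^n\om)^{u'}) = (w',(\tau^n\om)^{w'})$ with $w' = \psi_{\tau^n\om}(g_{(u,\om^u)})(u')$. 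Applying (\ref{sections}) to collapse $\phi_\om$ of the concatenation gives $\psi_\om(g)(uu') = ww'$, which is exactly the required section formula.

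The one step requiring genuine verification, beyond unwinding definitions, is the invariance of each $T_\om$ under $V$; this is already implicit in the paragraph preceding the lemma, where the authors assert $\psi_\om(g) \in Aut(T_2)$. It follows by easy induction on $|u|$ from (\ref{wreath}): the generator $A$ only swaps first coordinates and trivially preserves $T_\om$, while for $B,C,D$ the root permutation is trivial and the sections at $(0,y)$ and $(1,y)$ are, respectively, elements of $\{A,E\}$ (matching the functions $\beta,\zeta,\delta$) and $B,C,D$ themselves, all of which preserve the appropriately shifted subtree by induction. The main obstacle is therefore bookkeeping rather than mathematical content: one must carefully keep track of how the natural identification of the subtree of $T_6$ rooted at $(u,\om^u)$ with $T_6$ shifts the coding sequence from $\om$ to $\tau^n\om$, which is exactly what equation (\ref{sections}) encodes.
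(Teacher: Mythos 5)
Your proof is correct and follows essentially the same route as the paper's: decompose $g(uz,\om^{uz})$ via the section of $g$ at $(u,\om^u)$ in $T_6$, then apply equation (\ref{sections}) to split $\phi_\om$ of the concatenation into $\psi_\om(g)(u)\,\psi_{\tau^n\om}(g_{(u,\om^u)})(z)$. The only difference is that you spell out the $T_\om$-invariance of $V$ (needed to legitimately apply (\ref{sections})), which the paper leaves implicit in its remark that $\psi_\om(g)\in Aut(T_2)$ is straightforward to verify.
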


\begin{proof}Let $u,z\in\{0,1\}^\ast,|u|=n$ and denote $\om^u=v,(\tau^{n}\om)^z=v'$
 $$g(uz,\om^{uz})=g(uz,vv')=g\left((u,v)(z,v')\right)=g(u,v)g_{(u,v)}(z,v') $$
 Hence, by  Equation \ref{sections}, 
 $$\psi_\om(g)(uz)=\phi_\om(g(uz,\om^{uz}))=\phi_\om\left(g(u,v)g_{(u,v)}(z,v') \right)=
 \phi_\om(g(u,v)) \phi_{\tau^n\om}(g_{(u,v)}(z,v'))$$
 $$=\psi_\om(g)(u)\psi_{\tau^n\om}(g_{(u,v)})(z) $$
The result follows.
 \end{proof}
 
 \begin{lem} \label{mapping}
For any $\om \in \Om$,  $\psi_\om$ defines a marked surjective homomorphism
$\psi_\om:V \to G_\om$.

\end{lem}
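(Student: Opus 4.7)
The plan is to verify that $\psi_\om$ sends the generators $A, B, C, D$ of $V$ to the generators $a, b_\om, c_\om, d_\om$ of $G_\om$ respectively, in this order. Once this is established, the marked surjectivity of $\psi_\om$ onto $G_\om$ is immediate: the image contains each canonical generator of $G_\om$ (giving surjectivity), the map sends generators to generators in order (giving the marked property), and it is already known to be a group homomorphism by the remark preceding Lemma \ref{sections_lem}.

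First I would dispose of $A$. Since $A$ has trivial sections and its root permutation $(14)(25)(36)$ exchanges $(0,y) \leftrightarrow (1,y)$ for each $y \in \{0,1,2\}$, a direct computation gives $A(u, \om^u) = (a(u), \om^u) = (a(u), \om^{a(u)})$ for every $u \in T_2$, where the last equality uses the fact that $|a(u)| = |u|$. Applying $\phi_\om$ yields $\psi_\om(A) = a$ for every $\om$. Trivially $\psi_\om(E) = e$ as well.

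Next I would treat $B, C, D$ by matching their $\psi_\om$-images against the recursive definitions of $b_\om, c_\om, d_\om$. Since each of $B, C, D$ has trivial root permutation, so do $\psi_\om(B), \psi_\om(C), \psi_\om(D)$. By Lemma \ref{sections_lem} and the sections read off from (\ref{wreath}), the first-level sections of $\psi_\om(B)$ satisfy $\psi_\om(B)_0 = \psi_{\tau\om}(B_{(0, \om_1)})$ and $\psi_\om(B)_1 = \psi_{\tau\om}(B)$. The sections of $B$ at $(0,0), (0,1), (0,2)$ are $A, A, E$; passing these through $\psi_{\tau\om}$ using the previous step converts them to $a, a, e$, which is precisely $\beta(\om_1)$ as $\om_1$ ranges over $\{0,1,2\}$. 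Hence $\psi_\om(B)$ obeys the same recursion as $b_\om$, and a straightforward induction on tree depth, carried out uniformly over all $\om \in \Om$, yields $\psi_\om(B) = b_\om$. The triples $(A, E, A)$ and $(E, A, A)$ for $C$ and $D$ match $\zeta$ and $\delta$ in exactly the same way, giving $\psi_\om(C) = c_\om$ and $\psi_\om(D) = d_\om$.

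I expect no serious obstacle here. The argument is essentially bookkeeping that compares the self-similar data of $V$ with the defining recursions of the groups $G_\om$, with $\om$ shifting to $\tau\om$ in lockstep with descending one level in the tree. The only thing to watch is that the simultaneous induction over $\om$ and tree depth closes cleanly, which it does because each recursive step of the $G_\om$ definition consumes exactly one letter of $\om$ in the same way that Lemma \ref{sections_lem} shifts $\om$ to $\tau^n\om$ when descending to level $n$.
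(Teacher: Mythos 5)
Your proposal is correct and follows essentially the same route as the paper: check $\psi_\om(A)=a$ directly, then use Lemma \ref{sections_lem} to match the first-level sections of $B,C,D$ (namely $A,A,E$ etc.\ against $\beta,\zeta,\delta$) and close an induction on tree depth carried out uniformly in $\om$, with $\om$ replaced by $\tau\om$ at each level. The only difference is cosmetic: you phrase the inductive step as "same recursion, hence equal," whereas the paper writes out the case analysis on $\om_1$ explicitly.
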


\begin{proof}
It is enough to show that $\psi_\om$ maps generators of $V$ to the generators
of $G_\om$. Firstly, by definition of $A$ we have
$$\psi_\om(A)(u)=\phi_\om(A(u,\om^u))=\phi_\om((a(u),\om^u))=a(u)\;\text{for all}\;\;u.$$

We will show by induction on $|u|$ that $B,C,D$ are mapped to $b_\om,c_\om,d_\om$ respectively.
If $|u|=1$ it is straightforward to check this.  Using Lemma \ref{sections_lem} and induction
assumption we have for $u\in\{0,1\}^\ast$

$$\psi_\om(B)(0u)=0\psi_\om(B)_0(u)=0\psi_\om(B_{(0,\om^0)})(u) =\left\{
\begin{array}{cc}
 0a(u) & \;\; \text{if} \; \om^0=0,1 \\
 0u & \;\; \text{if} \; \om^0=2 
\end{array}\right.=b_\om(0u)
$$

Similarly one can check that $\psi_\om(B)(1u)=b_\om(1u)$ for all $u\in\{0,1\}^\ast$
and hence $\psi_\om(B)=b_\om$. 
Repeating the argument shows that $\psi_\om(C)=c_\om,\psi_\om(D)=d_\om$.

\end{proof}

\begin{thm}
 The group $V$ is isomorphic to the universal group $U$ (as a marked group).
\end{thm}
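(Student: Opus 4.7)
The plan is to establish the marked isomorphism $V \cong U$ in two steps: first produce a marked epimorphism $V \twoheadrightarrow U$ via the universal property of $U$, and then show it is injective by embedding $V$ diagonally into $\prod_{\omega \in \Omega} G_\omega$ through the maps $\psi_\omega$.

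For the surjection, Lemma \ref{mapping} gives, for every $\om \in \Om$, a marked surjective homomorphism $\psi_\om : V \to G_\om$. Hence the canonical map from $(V, \{A,B,C,D\})$ to every $(G_\om, S_\om)$ is a group homomorphism, and the universal property of $U = U_{\m{G}}$ recorded after Definition \ref{defnofuniversalgp} then forces the canonical map $(V, \{A,B,C,D\}) \to (U, \{a,b,c,d\})$ to also be a group homomorphism. Since its image contains the generators $a, b, c, d$, it is surjective.

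For injectivity, I would form the product map $\Psi : V \to \prod_{\om \in \Om} G_\om$ defined by $\Psi(g) = (\psi_\om(g))_{\om \in \Om}$. Because $\psi_\om$ factors as $V \to U \twoheadrightarrow G_\om$ (by uniqueness of canonical maps), any element of $\ker(V \to U)$ lies in $\ker \Psi$, so it suffices to show $\Psi$ is injective. Suppose $g \in V$ satisfies $\psi_\om(g) = 1$ for every $\om \in \Om$. Given an arbitrary vertex $(u,v) \in T_6$ with $|u| = |v| = n$, extend the word $v \in \{0,1,2\}^n$ to some $\om \in \Om$, so that $v = \om^u$ and $(u,v) \in T_\om$. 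The identity $\psi_\om(g)(u) = \phi_\om(g(u, \om^u))$ combined with $\psi_\om(g) = 1$ yields $\phi_\om(g(u,v)) = u$; since $V$ preserves $T_\om$ and $\phi_\om : T_\om \to T_2$ is a bijection, this forces $g(u,v) = (u,v)$. As $(u,v)$ was arbitrary, $g = 1$ in $V$, hence $\Psi$ (and therefore $V \to U$) is injective.

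The single point that deserves extra care, and which I regard as the main technical obstacle, is the claim that each element of $V$ preserves every subtree $T_\om$ --- this is implicit in the well-definedness of $\psi_\om$ but should be confirmed directly from the recursive description (\ref{wreath}), for instance by induction on depth, using the fact that the wreath recursions for $A,B,C,D$ never alter the second coordinate of a letter of $\m{A}$ and merely permute first coordinates. Once this invariance is in hand, the argument becomes a clean application of the universal property coupled with the diagonal-group viewpoint of Definition \ref{diaggp}, and no further computation is needed.
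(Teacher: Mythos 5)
Your proposal is correct and is essentially the paper's own argument: surjectivity via Lemma \ref{mapping} and the universal property, and injectivity from the fact that every vertex of $T_6$ lies in some $T_\om$, so an element killed by all $\psi_\om$ fixes every vertex (the paper states this contrapositively, choosing a moved vertex for a nontrivial $g$ and concluding $\psi_\om(g)\neq 1$). Your extra check that $V$ preserves each $T_\om$ is the same fact the paper absorbs into the well-definedness of $\psi_\om$, so no genuinely new route is taken.
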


 \begin{proof}
  By Lemma \ref{mapping}, for each $\om \in \Om$ there exists a marked surjection 
  $\psi_\om: V\to G_\om$, and hence there exists a marked surjection $\psi:V\to U$.
  If $g\in V$ is a nontrivial, let $v\in T_6$ such that $gv\neq v$. Let $\om\in \Om$
  be such that $v\in T_\om$. This shows that $\psi_\om(g)\neq 1$ and hence $\psi(g)\neq 1$.
  This shows that $\psi$ is a marked isomorphism. 
 \end{proof}

From now on we will identify $U$ with $V$.


Sidki in \cite{MR1774362} classified automata groups according to their "activity growth" and conjectured that automata groups of polynomial growth are amenable. Note that the automaton defining $U$ has exponential activity growth in Sidki's classification. The question of amenability of the group $U$ remains open. The note \cite{Muchnik_note} claiming amenability of this group unfortunately contains a mistake.

\subsection{Branch Structure of $U$} \mbox{}

\smallskip

Let $G$ be a group acting on a rooted  $d$-ary tree $T_d$. For a vertex $v$ of $T_d$, let $T_v$ denote
the subtree hanging down at vertex $v$ and for an element $g\in G$ let $supp(g)$ be the support of $g$ i.e., 
the set of vertices not fixed by $g$. The stabilizer of a vertex $v$ is the subgroup
$St_G(v)=\{g\in G\mid g(v)=v\}$.
The rigid stabilizer of a vertex $v$ is the 
subgroup $Rist_G(v)=\{g \in G \mid supp(g) \subset T_v\}$. The rigid stabilizer of level $n$ 
is the subgroup $Rist_G(n)=\left<Rist_G(v) \mid |v|=n\right>$. Since
rigid stabilizer of distinct vertices of the same level commute, we have $Rist_G(n)=\prod_{|v|=n}Rist_G(v)$.
\begin{defn}
 Let $G$ be group of automorphisms of a rooted tree $T$. 
 $G$ is said to be a near branch group (resp. weakly near branch group)
 if for all $n\ge1$, the subgroup $Rist_G(n)$ has finite index in $G$ (resp. is nontrivial). 
 If in addition $G$ acts level transitively (i.e., transitively on each level of the tree) 
 then $G$  is called a branch group (weakly branch group) respectively.
\end{defn}

The class of (weakly) branch groups is interesting from various points of view
and plays an important role in the classification of just-infinite groups, i.e., 
infinite groups whose proper homomorphic images are all finite (see \cite{MR1765119}
for a detailed account on branch groups and just-infinite groups).

Let us mention the following fact which will be used in the forthcoming sections.
We will also give an alternative proof of this fact later.

\begin{thm}\cite{MR764305}
 For $\om \in \Om_\infty$, the group $G_\om$ is a branch group.
\end{thm}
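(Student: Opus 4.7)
The plan is to verify the two defining conditions of a branch group: the level-transitive action of $G_\omega$ on the binary tree $T$, and finite index of $Rist_{G_\omega}(n)$ in $G_\omega$ for every level $n \geq 1$.

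For level-transitivity I would induct on $n$. The base case $n = 1$ is immediate since $a$ swaps the two children of the root. For the inductive step, I would first establish that the section map from $St_{G_\omega}(x)$ to $Aut(T_x)$ at a level-$1$ vertex $x$ surjects onto $G_{\tau\omega}$: by the recursive definition the sections at vertex $1$ of $b_\omega, c_\omega, d_\omega$ are $b_{\tau\omega}, c_{\tau\omega}, d_{\tau\omega}$, and conjugating one of them by $a$ yields an element of $St_{G_\omega}(1)$ whose section at vertex $1$ equals $a$ (since for every $\omega_1 \in \{0,1,2\}$ at least one of $\beta(\omega_1), \zeta(\omega_1), \delta(\omega_1)$ is $a$). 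Since $\tau\omega \in \Omega_\infty$, the inductive hypothesis provides transitivity of $G_{\tau\omega}$ on level $n$ of $T_x$, hence transitivity of $G_\omega$ on level $n+1$ of $T$.

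The core of the argument is establishing finite index at level $1$. The recursive definition implies that $\omega_1 \in \{0,1,2\}$ selects a ``null'' generator $p_\omega \in \{b_\omega, c_\omega, d_\omega\}$ of the form $(e, p_{\tau\omega})$: namely $d_\omega$ if $\omega_1 = 0$, $c_\omega$ if $\omega_1 = 1$, and $b_\omega$ if $\omega_1 = 2$. Thus $p_\omega$ is supported on the subtree $T_1$ and $a p_\omega a$ on $T_0$, so both lie in $Rist_{G_\omega}(1)$. I would then show that the normal closure $\langle\langle p_\omega \rangle\rangle^{G_\omega}$ has finite index in $G_\omega$ by adjoining the relation $p_\omega = 1$ to the defining relations $a^2 = b_\omega^2 = c_\omega^2 = d_\omega^2 = b_\omega c_\omega d_\omega = 1$ and reducing the resulting quotient to a finite $2$-group; since $\langle\langle p_\omega \rangle\rangle^{G_\omega} \subseteq Rist_{G_\omega}(1)$, this yields the claim.

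To propagate to all levels, I would induct on $n$, using self-similarity and the fact that $\tau^k\omega \in \Omega_\infty$ for every $k \geq 0$. Informally, $Rist_{G_\omega}(n+1)$ is built from copies of $Rist_{G_{\tau\omega}}(n)$ through the section maps at the two level-$1$ vertices, so combining the finite-index result at level $1$ from the previous paragraph (applied to $\omega$) with the inductive hypothesis (applied to $\tau\omega$) yields finite index at level $n+1$. The main obstacle is the finite-quotient computation at level $1$: because $p_\omega$ varies with $\omega_1$, three cases must be handled, and in each one must carefully extract enough relations from the wreath recursion to identify the quotient as a concrete finite group. The hypothesis $\omega \in \Omega_\infty$ is essential throughout, because it guarantees that a null generator is available at every shift $\tau^k\omega$, ensuring that the inductive propagation through the levels does not stall.
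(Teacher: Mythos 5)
Your level-transitivity induction and the observation that the null generator $p_\omega$ (hence also $ap_\omega a$, hence the whole normal closure $\langle\langle p_\omega\rangle\rangle$, since $Rist_{G_\omega}(1)$ is normal) lies in $Rist_{G_\omega}(1)$ are fine. The first genuine gap is your justification of finite index at level $1$: the relations $a^2=b_\omega^2=c_\omega^2=d_\omega^2=b_\omega c_\omega d_\omega=1$ are \emph{not} defining relations of $G_\omega$ --- the group they present is $\mathbb{Z}_2 \ast (\mathbb{Z}_2\times\mathbb{Z}_2)$, and adjoining $p_\omega=1$ to them yields the infinite dihedral group, not a finite $2$-group. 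So no "reduction to a concrete finite group" is possible from those relations alone; finiteness of $G_\omega/\langle\langle p_\omega\rangle\rangle$ needs an external input, e.g.\ that $G_\omega$ is a torsion $2$-group for $\omega\in\Omega_\infty$ (so the dihedral image generated by two involutions is finite --- the wreath recursion by itself only gives identities like $\mathrm{ord}(ab_\omega)=2\,\mathrm{ord}(ab_{\tau\omega})$, whose termination is precisely the torsion theorem of \cite{MR764305}), or just-infiniteness of $G_\omega$, which immediately makes any nontrivial normal closure of finite index. (A small slip in the same step: the section of $p_\omega$ at the vertex $1$ is $b_{\tau\omega}$, $c_{\tau\omega}$ or $d_{\tau\omega}$ according to $\omega_1$; it is not in general the null generator $p_{\tau\omega}$, which is determined by $\omega_2$.)

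The second, more serious gap is the propagation to higher levels. The section map embeds $St_{G_\omega}(1)$ into $G_{\tau\omega}\times G_{\tau\omega}$ only as a proper subdirect product, so $Rist_{G_\omega}(n+1)$ is \emph{not} "built from copies of $Rist_{G_{\tau\omega}}(n)$": an element $h\in Rist_{G_{\tau\omega}}(n)$ helps only if $(1,h)$ actually belongs to $G_\omega$, and moreover only its liftable vertex components contribute, since $Rist(n+1)$ is generated by elements supported on single level-$(n+1)$ subtrees. Finite index of $Rist_{G_\omega}(1)$ together with finite index of $Rist_{G_{\tau\omega}}(n)$ does not yield this: intersecting the subgroup of liftable sections with $Rist_{G_{\tau\omega}}(n)$ does not produce a vertex-wise product of finite index. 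What is needed is a family of finite-index subgroups with a vertex-wise containment $K_{\tau\omega}\times K_{\tau\omega}\preceq K_\omega$, and supplying it is exactly the content of the paper's proof: there $U$ is shown (by explicit computations) to be regular branch over $U'''$, this is projected to give $G_{\tau\omega}'''\times G_{\tau\omega}'''\preceq G_\omega'''$, iterated to get $\prod_{2^n}G_{\tau^n\omega}'''\preceq Rist_{G_\omega}(n)$, and the conclusion uses just-infiniteness plus non-solvability of $G_\omega$. If you want to stay close to your outline, the cheapest repair is to invoke just-infiniteness (Theorem 1(6)): then every nontrivial $Rist_{G_\omega}(n)$ automatically has finite index, so only nontriviality at each level must be proved --- and even that requires a lifting argument, e.g.\ showing that the sections at a first-level vertex of $Rist_{G_\omega}(1)$ form a finite-index subgroup of $G_{\tau\omega}$, which then meets the (infinite, by just-infiniteness and induction) rigid vertex stabilizers of $G_{\tau\omega}$ nontrivially.
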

Note that at the terminology ``branch group''  was not used in \cite{MR764305}.

If $G$ is a self-similar group, a standard way to show  near branch property (resp. weakly near branch property) is to find a
finite index subgroup $K$ (resp. nontrivial subgroup) of $G$ such that the image 
$\phi(K)$ contains the subgroup $K\times \cdots \times K$ where 
 $\phi :Aut(T_d)  \rightarrow S_d  \ltimes \left(Aut(T_d) \times \cdots \times Aut(T_d)\right )$
 is as defined in the previous section. 
 This inclusion is denoted by 
$K\succcurlyeq K\times \cdots \times K$. In this case the group is said to be
a regular ((weakly) near) branch group over the subgroup $K$.
\begin{defn}
 Let $G$ be a self-similar group of automorphisms of a $d$-ary rooted tree $d$. $G$ is said to be 
 \textit{self-replicating} if for all $g\in G$ and all $x\in \{0,1,2,\ldots,d-1\}$, there exists 
 an element $h\in St_G(1)$ such that $h_x=g$.
\end{defn}

Regarding the action of $U$ on $T_6$ we have the following:
\begin{thm}\label{U_branch}
 $U$ is a self-replicating weakly near branch group, regular branching over the third commutator subgroup
 $U'''$.
\end{thm}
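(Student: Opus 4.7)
The plan is to establish the three properties of $U$ in sequence; regular branching over $U'''$ together with the nontriviality of $U'''$ will imply the weakly near branch property.

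For self-replication, observe that the six elements $B, C, D, B^A, C^A, D^A$ all lie in $\mathrm{St}_U(1)$. From the wreath recursion (1) we read off their sections at each level-1 vertex directly; a case check shows that for every $i \in \{1, \ldots, 6\}$ and every generator $g \in \{A, B, C, D\}$, the element $g$ appears as the $i$-th section of one of these six stabilizer elements. Since $\pi_i \colon \mathrm{St}_U(1) \to U$ is a homomorphism hitting a generating set, it is surjective, so $U$ is self-replicating.

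For regular branching over $U'''$, note that for $X, Y \in \mathrm{St}_U(1)$ the commutator satisfies $[X, Y]_i = [X_i, Y_i]$ componentwise, so the ``zero set'' (positions where the section is trivial) of a commutator contains the union of the zero sets of its factors. Compute the six commutators $[X, Y^A]$ with $X \ne Y$ in $\{B, C, D\}$; each lies in $U' \cap \mathrm{St}_U(1)$ and has a zero set of size exactly two, with the six resulting pairs pairwise distinct. Two successive rounds of iterated commutators then yield, for each level-1 vertex $v$, a nontrivial element $\xi_v \in \mathrm{Rist}_U(v) \cap U'''$: the first round produces $U''$-elements whose zero sets cover three positions, and pairing two such $U''$-elements whose zero sets union to the complement of $\{v\}$ produces a $U'''$-element supported only at $v$. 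Conjugating $\xi_v$ by elements of $\mathrm{St}_U(1)$ stays in $\mathrm{Rist}_U(v) \cap U'''$ and, thanks to self-replication, realizes every $U$-conjugate of $\pi_v(\xi_v)$ as a $v$-section. Generating further seeds by the same procedure and combining their conjugates gives $\pi_v(\mathrm{Rist}_U(v) \cap U''') = U'''$; multiplying independent rigid-stabilizer elements across the six level-1 vertices produces $\phi(U''') \supseteq (U''')^6$.

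Weakly near branching then follows by iteration: $\mathrm{Rist}_U(n)$ contains a direct product of $6^{n-1}$ copies of $U'''$, which is nontrivial because $U$ surjects onto the infinite torsion groups $G_\omega$ for $\omega \in \Omega_\infty$, and these are non-solvable (no infinite finitely generated torsion group is solvable), so $U''' \ne 1$.

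The main obstacle is the exhaustion step for regular branching. A naive conjugation argument produces only the normal closure in $U$ of a single seed's $v$-section, which a priori could be a proper normal subgroup of $U'''$. Overcoming this requires either exhibiting a sufficiently rich family of seed commutators, or leveraging the self-similarity of $U$ to reduce to deeper-level section problems and close up by a fixed-point argument on the derived series.
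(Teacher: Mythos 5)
Your self-replication argument is correct and is essentially the paper's: the elements $b,c,d,aba,aca,ada$ generate $St_U(1)$, and their sections at each of the six first-level coordinates exhaust the generating set $\{a,b,c,d\}$, so each coordinate projection of $St_U(1)$ is onto. Likewise the deduction of the weakly near branch property from regular branching over $U'''$ together with $U'''\neq 1$ (via the non-solvable quotients $G_\omega$) is fine. The problem is the regular branching step, where the gap you flag in your last paragraph is real and is not closed by anything earlier in your argument: producing one nontrivial seed $\xi_v\in Rist_U(v)\cap U'''$ per first-level vertex and conjugating it by $St_U(1)$ yields only the normal closure in $U$ of the single section $\pi_v(\xi_v)$, which a priori is a proper normal subgroup of $U'''$. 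Since the exhaustion $\pi_v\bigl(Rist_U(v)\cap U'''\bigr)=U'''$ is precisely the content of ``regular branching over $U'''$,'' naming two possible strategies without carrying either out leaves the theorem unproved.

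The paper closes this by making the normal generation of $U'''$ completely explicit and then matching it with explicit seeds. First it shows, using the relations $a^2=b^2=c^2=d^2=bcd=1$, that $U'$ is generated \emph{as a subgroup} (not merely as a normal subgroup) by $(ab)^2,(ac)^2,(ad)^2$. Hence $U''$ is generated by the full conjugate family $\{t^{g},v^{g},w^{g}\mid g\in U\}$ where $t=[(ab)^2,(ac)^2]$, $v=[(ab)^2,(ad)^2]$, $w=[(ac)^2,(ad)^2]$, and $U'''$ is the normal closure of the explicit set $S=\{[t^{g_1},v^{g_2}],[t^{g_3},w^{g_4}],[v^{g_5},w^{g_6}]\mid g_i\in U\}$. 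It then exhibits nine elements $h_1,\dots,h_9\in U''$ whose wreath decompositions place $t$, $v$, $w$ at a prescribed coordinate with the identity at the remaining relevant coordinates; conjugating these by stabilizer elements with prescribed sections (available by self-replication) and taking commutators realizes every element of $S$ as $(s,1,1,1,1,1)\in U'''$, and one further round of conjugation by $St_U(1)$ captures the whole normal closure, giving $U'''\times 1\times\cdots\times 1\preccurlyeq U'''$; the other coordinates follow by symmetry and conjugation by $a$. So what your proof is missing is exactly this ``sufficiently rich family of seed commutators'': an explicit normal generating set for $U'''$ together with elements of $U'''$ isolating each normal generator, up to arbitrary conjugation, in a single section.
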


\begin{proof} Note that $St_{U}(1)$ is generated by the elements $\{b,c,d,aba,aca,ada\}$.
Since we have
$$
\begin{array}{cccc}
b & =  &(a,a,1,b,b,b)  \\
c& = &(a,1,a,c,c,c)  \\
d & =  &(1,a,a,d,d,d) \\
aba & = &(b,b,b,a,a,1)&   \\
aca & = &(c,c,c,a,1,a)&  \\
ada & = &(d,d,d,1,a,a)&  \\
 \end{array}
$$
it follows that $U$ is self-replicating.

We claim that the derived subgroup $U'$ is generated by $(ab)^2,(ac)^2,(ad)^2$.
 From the basic relations  we have  that  $a,b,c,d$ are of order 2 and $b,c,d$ commute with each other.
Hence $U'$ is generated \textit{as a normal subgroup} by $$[a,b]=(ab)^2,[a,c]=(ac)^2,[a,d]=(ad)^2$$
Therefore it is enough to show that the subgroup generated by $(ab)^2,(ac)^2,(ad)^2$ is normal in $U$.
Clearly conjugation by $a$ inverts the elements $(ab)^2,(ac)^2,(ad)^2$.
For other conjugations we have (using the relation $bcd=1$):
$$x (ax)^2 x =(xa)^2=((ax)^2)^{-1}$$
and
$$y(ax)^2 y=(ya)^2 (az)^2=((ay)^2)^{-1} (az)^2 $$
where $x,y,z \in \{b,c,d\}$ are distinct. Therefore $U'$ is generated by $(ab)^2,(ac)^2,(ad)^2$.

Next we claim that $U$ is near weakly branch over the third derived subgroup $U'''$, that is:
 $U''' \succcurlyeq U'''\times U'''\times U'''\times U'''\times U'''\times U'''$.
Let 
$$t=[(ab)^2,(ac)^2], \quad v=[(ab)^2,(ad)^2] \quad w=[(ac)^2,(ad)^2] $$
 $U''$ is generated \textit{as a normal subgroup} by $t,v$ and $w$.
Hence $U''$ is generated by the set $$\{t^{g_1},v^{g_2},w^{g_3} \mid g_i \in U\}$$
It follows that $U'''$ is generated \textit{as a normal subgroup} by the set 
$$S=\{ [t^{g_1},v^{g_2}],[t^{g_3},w^{g_4}] ,[v^{g_5},w^{g_6}] \mid g_i \in U\} $$
We have the following equalities (this can best be checked with the GAP package 

http://www.gap-system.org/Packages/automgrp.html):
$$
\begin{array}{cccc}
 h_1  = & [ [ (ab)^2 ,b ] , [ b , (ca)^2 ] ] & = & (t,*,1,1,1,1)\\
 h_2  = &[ [  (ab)^2 ,b ] , [ c, (da)^2 ] ] & = & (v,1,1,1,*,1) \\

 h_3  = &[ [  c ,(ca)^2 ] , [ b, (da)^2 ] ] & = & (w,1,1,1,1,*) \\

 h_4  = &[ [  b ,(ba)^2 ] , [ d, (ca)^2 ] ] & = & (1,t,1,*,1,1) \\

 h_5  = &[ [  d ,(ad)^2 ] , [ b, (ba)^2 ] ] & = & (1,v,1,1,*,1) \\

 h_6  = &[ [  d ,(ca)^2 ] , [ b, (da)^2 ] ] & = & (1,w,1,1,1,*) \\

 h_7  = &[ [  c ,(ba)^2 ] , [ d, (ca)^2 ] ] & = & (1,1,t,*,1,1) \\

 h_8  = &[ [  d ,(ba)^2 ] , [ c, (da)^2 ] ] & = & (1,1,v,1,*,1) \\

 h_9  = &[ [  c ,(ca)^2 ] , [ d, (da)^2 ] ] & = & (1,1,w,1,1,*) \\
\end{array} 
$$
where $\ast$ are elements of $U$ not of importance.
Clearly $h_i \in U''$ for $i=1,2,3,4,5,6$.
Given $g_1,g_2,g_3,g_4,g_5,g_6 \in U$, due  the fact that $U$ is self-replicating, there are elements
 $\gamma_1,\gamma_2, \gamma_3 ,\gamma_4,
\gamma_5 ,\gamma_6 \in U$ such that 
$$
\begin{array}{c}
 \gamma_1=(g_1,*,*,*,*,*) \\
  \gamma_2=(g_2,*,*,*,*,*) \\
\gamma_3=(g_3,*,*,*,*,*) \\
\gamma_4=(g_4,*,*,*,*,*) \\
\gamma_5=(g_5,*,*,*,*,*) \\
\gamma_6=(g_6,*,*,*,*,*) \\
\end{array}
$$
So, 
$$
\begin{array}{c}
 [h_1^{\gamma_1},h_2^{\gamma_2}]=([t^{g_1},v^{g_2}],1,1,1,1,1) \\

 [h_1^{\gamma_3},h_3^{\gamma_4}]=([t^{g_3},w^{g_4}],1,1,1,1,1) \\

[h_2^{\gamma_5},h_3^{\gamma_6}]=([v^{g_5},w^{g_6}],1,1,1,1,1) \\
\end{array}
$$
and clearly left hand sides are elements of $U'''$.
Using the fact that  $U$ is self-replicating we see that 
$$  U''' \succcurlyeq U'''\times 1 \times 1 \times 1 \times 1 \times 1.$$
Doing same thing in second and third coordinates and using other $h_i$ we see that
$$  U''' \succcurlyeq 1\times U''' \times 1 \times 1 \times 1 \times 1$$
and
$$  U''' \succcurlyeq 1\times 1 \times U''' \times 1 \times 1 \times 1$$
and finally conjugating with $a$ we also have 
$$
\begin{array}{c}
   U''' \succcurlyeq 1\times 1 \times 1 \times U'''\times 1 \times 1 \\
   U''' \succcurlyeq 1\times 1 \times 1 \times 1 \times U'''\times 1 \\
   U''' \succcurlyeq 1\times 1 \times 1 \times 1\times 1 \times U''' \\
\end{array}
$$
which shows that
$$U''' \succcurlyeq U'''\times U'''\times U'''\times U'''\times U'''\times U'''.$$

Clearly $U'''$ is non-trivial since $U$ has non-solvable quotients. 
\end{proof}

Note that $U/U'''$ maps onto the group $\tilde G_{000\ldots}$ and hence is infinite. Also, $U$
cannot have a branch type action (on any rooted tree) since all non-trivial quotients 
of branch groups are virtually abelian, a fact proven in \cite{MR1765119}.

\subsection{Branch structure of general universal groups}\mbox{}

\medskip

In this subsection we will investigate the branch structure of arbitrary universal groups.

For $\om\in \Om$ we have an injection

$$\begin{array}{cccll}
 \phi_\om :  & G_\om & \rightarrow & S_2  \ltimes \left(G_{\tau\om} \times G_{\tau \om}\right ) \\
         &   a      &  \mapsto    & (01)  (1,1) \\
         &  b_\om   &  \mapsto &         (\beta(\om_0 ),b_{\tau\om}) \\
          &  c_\om   &  \mapsto &      (\zeta(\om_0 ),c_{\tau\om}) \\
           &  d_\om   &  \mapsto &    (\delta(\om_0 ),d_{\tau\om}) \\
  \end{array}
$$

For subgroups $H\le G_\om$ and $H\le G_{\tau\om}$ let us write  $K\times K\preceq H$
if $K\times K \le \phi_\om(H)$. Note that this means $H$ contains a subgroup 
isomorphic to $K\times K$.

\begin{prop}\label{third_comm}
 For $\om \in \Om $  we have $ G_{\tau\om}''' \times G_{\tau\om}''' 
 \preceq G_\om '''$
\end{prop}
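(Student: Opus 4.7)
The plan is to deduce the statement from the branch structure of the universal group $U$ (Theorem \ref{U_branch}) via the surjection $\psi_\om : U \to G_\om$ from Lemma \ref{mapping}. Since $\psi_\om$ is a surjective group homomorphism, it restricts to a surjection $U''' \twoheadrightarrow G_\om'''$ on third derived subgroups, so every element of $G_{\tau\om}'''$ lifts through $\psi_{\tau\om}$ to an element of $U'''$, and the task reduces to constructing appropriate elements of $U'''$ on $T_6$ and reading off their images on $T_2$.

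More concretely, given arbitrary $k_1,k_2 \in G_{\tau\om}'''$, I first pick lifts $\tilde k_1,\tilde k_2 \in U'''$ with $\psi_{\tau\om}(\tilde k_i) = k_i$. The binary subtree $T_\om \subset T_6$ meets level $1$ of $T_6$ at exactly the two vertices $(0,\om_0)$ and $(1,\om_0)$ under the enumeration of $\mathcal{A} = \{0,1\}\times\{0,1,2\}$ used to define $V$. Invoking Theorem \ref{U_branch}, which gives the six-fold inclusion $U''' \succcurlyeq U'''\times U'''\times U'''\times U'''\times U'''\times U'''$, I produce an element $F \in U'''$ that acts trivially on the first level of $T_6$ and whose first-level sections are $\tilde k_1$ at $(0,\om_0)$, $\tilde k_2$ at $(1,\om_0)$, and the identity at the other four vertices. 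Because $F$ fixes level $1$ of $T_6$, the defining formula $\psi_\om(F)(u) = \phi_\om(F(u,\om^u))$ shows $\psi_\om(F)$ fixes level $1$ of $T_2$, and by Lemma \ref{sections_lem} its two first-level sections on $T_2$ are $\psi_{\tau\om}(F_{(u,\om^u)})$ for $u \in \{0,1\}$, giving $k_1$ and $k_2$ respectively. Since $\psi_\om(F) \in G_\om'''$, this produces the required element of $G_\om'''$ whose $\phi_\om$-image is $(k_1,k_2)$, and this pair was arbitrary.

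I do not expect a substantive obstacle, since Theorem \ref{U_branch} was engineered precisely so that branch-type statements for the family $\{G_\om\}$ can be read off from $U$; the only other ingredient is the compatibility between sections on $T_6$ and sections on $T_2$ supplied by Lemma \ref{sections_lem}. An alternative, should the descent via $\psi_\om$ feel indirect, would be to mimic the proof of Theorem \ref{U_branch} directly inside $G_\om$, noting that the two coordinate projections of $\phi_\om(\mathrm{St}_{G_\om}(1))$ each surject onto $G_{\tau\om}$ by inspection of the generators; this would, however, require rerunning a nine-commutator computation on the binary tree, which the descent argument avoids.
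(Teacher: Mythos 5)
Your proposal is correct and follows essentially the same route as the paper: both deduce the statement from the six-fold branching of $U'''$ in Theorem \ref{U_branch}, pushed down to $G_\om$ via the surjection $\psi_\om$ and the section-compatibility of Lemma \ref{sections_lem} (which the paper packages as a commutative diagram with a projection onto the two coordinates of level $1$ of $T_6$ lying on $T_\om$). Your element-wise phrasing with explicit lifts $\tilde k_1,\tilde k_2$ is just an unwinding of that diagram.
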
 

\begin{proof}
 Let us assume that $\om_0=0$.   Define 
 $$\pi:U\times U\times U\times U\times U\times U \to G_\om \times G_\om $$
by $\pi(u_1,u_2,u_3,u_4,u_5,u_6)=(\psi_\om(u_1),\psi_\om(u_4))$ where $\psi_\om$ is as defined in section 
4.1. Let $\phi : U\to S_6 \ltimes U\times U\times U\times U\times U\times U$ be the canonical map.

Then the following diagram commutes.

$$\begin{array}[c]{ccc}
St_U(1) &\stackrel{\phi}{\longrightarrow}& U\times U\times U\times U\times U\times U\\
\downarrow\scriptstyle{\psi_\om}&&\downarrow\scriptstyle{\pi}\\
St_{G_\om}(1)&\stackrel{\phi_\om}{\longrightarrow}& G_{\tau\om}\times G_{\tau \om}
\end{array}$$

By Theorem \ref{U_branch}, we have $ U'''\times U'''\times U'''\times U'''\times U'''\times U'''\preceq U'''$.
Since $\psi_\om(U''')=G_\om'''$  we see that $ G_{\tau\om}''' \times G_{\tau\om}''' 
 \preceq G_\om '''$. 
 
 The case when $\om_0=1$ or $\om_0=2$ can be proven similarly by modifying $\pi$.

\end{proof}

\begin{cor}
 For $\om \in \Om_\infty$, $G_\om$ is a branch group.
\end{cor}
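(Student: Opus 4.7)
The strategy is to iterate Proposition \ref{third_comm} along the shift orbit of $\om$, produce nontrivial subgroups inside the rigid level stabilizers, and then use just-infiniteness of $G_\om$ to upgrade ``nontrivial'' to ``finite index''. The first thing to notice is that $\Om_\infty$ is $\tau$-invariant, so $\tau^n\om \in \Om_\infty$ for every $n \ge 0$. By Theorem \ref{grig84}(5) each $G_{\tau^n\om}$ is then an infinite, finitely generated torsion $2$-group, hence not solvable (any finitely generated solvable torsion group is finite, as one sees by recursively applying the fact that a finitely generated abelian torsion group is finite and that subgroups of finite index in finitely generated groups are finitely generated). In particular $G_{\tau^n\om}''' \ne 1$ for all $n \ge 0$.

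The main step is to iterate Proposition \ref{third_comm}. Applied to $\om$ it yields $G_{\tau\om}''' \times G_{\tau\om}''' \preceq G_\om'''$; unpacking the definition of $\preceq$ via the injection $\phi_\om$, the first (respectively second) factor acts only on the subtree rooted at the level-$1$ vertex $0$ (resp.\ $1$), so it sits inside the corresponding rigid vertex stabilizer. Applied in turn inside each of these factors, the proposition (now a statement about $\tau\om$) splits each $G_{\tau\om}'''$ into $G_{\tau^2\om}''' \times G_{\tau^2\om}'''$ supported on the two children of the corresponding level-$1$ vertex, and so on. After $n$ steps one gets that for every vertex $v$ at level $n$ the rigid stabilizer $Rist_{G_\om}(v)$ contains an isomorphic copy of $G_{\tau^n\om}'''$, so that
\[
Rist_{G_\om}(n) \;\supseteq\; \bigl(G_{\tau^n\om}'''\bigr)^{2^n} \;\ne\; 1.
\]

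Next I would verify that $G_\om$ acts level-transitively on the binary tree $T$. This holds at level $1$ because of $a$, and it propagates by induction: the sections at the vertex $1$ of the generators of $St_{G_\om}(1)$ recover all generators of $G_{\tau\om}$ (the sections of $b_\om, c_\om, d_\om$ at $1$ are $b_{\tau\om}, c_{\tau\om}, d_{\tau\om}$, and $a$ appears as the $1$-section of one of $aba$, $aca$, $ada$ since at least one of $\beta(\om_1), \zeta(\om_1), \delta(\om_1)$ equals $a$). Level-transitivity implies that conjugation by elements of $G_\om$ permutes the rigid vertex stabilizers at level $n$, and hence $Rist_{G_\om}(n)$ is a normal subgroup of $G_\om$. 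Being a nontrivial normal subgroup of the just-infinite group $G_\om$ (Theorem \ref{grig84}(6)), it has finite index. Combined with level-transitivity this gives the branch property.

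The only delicate point is the bookkeeping in the iteration of Proposition \ref{third_comm}: one must check that the $\preceq$-inclusions composed through $n$ applications of the $\phi_{\tau^k\om}$'s really place each abstract factor $G_{\tau^n\om}'''$ inside the rigid stabilizer of a single level-$n$ vertex, rather than in an isomorphic but geometrically spread-out copy. Since each application of the proposition already has this subtree-level localization property at level $1$, an induction on $n$ using the self-similar structure of $T$ closes the gap cleanly, and all other ingredients of the proof are immediate from results already in the paper.
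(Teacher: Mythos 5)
Your proof is correct and follows essentially the same route as the paper: iterate Proposition \ref{third_comm} to place $\prod_{1}^{2^n} G_{\tau^n\om}'''$ inside $Rist_{G_\om}(n)$, observe these factors are nontrivial because an infinite finitely generated torsion group cannot be solvable, and invoke just-infiniteness. The only (harmless) variation is in the last step: the paper derives the finite index of $Rist_{G_\om}(n)$ from the finite index of $G_{\tau^n\om}'''$ in the just-infinite groups $G_{\tau^n\om}$, whereas you apply just-infiniteness of $G_\om$ directly to the nontrivial normal subgroup $Rist_{G_\om}(n)$ (which, incidentally, is normal for any tree action, level-transitive or not, since conjugation permutes the level-$n$ rigid vertex stabilizers).
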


\begin{proof}
 It follows by Proposition \ref{third_comm} and an induction argument that for any $n\ge 1$ we have
 $$\prod_{1}^{2^n} G_{\tau^n\om}''' \preceq G_\om'''  $$
It follows that for any $n\ge 1$ , $\prod_{1}^{2^n} G_{\tau^n\om}''' \preceq Rist_{G_\om}(n)$.
Note that for any $\om \in \Om \setminus \Om_0 $, $G_\om'''$ is nontrivial (since $G_\om$ is not solvable)
and also have finite index (since $G_\om$ are just-infinite.) It follows that $Rist_{G_\om}(n)$
has finite index for all $n\ge 1$.
 \end{proof}

For a non-empty subset $\Lambda \subset \Omega$, let us denote the 
universal group corresponding to the family
$\{(G_\om,S_\om) \mid \om \in \Lambda\}$ by $U_\Lambda$.
 Given $\Lambda \subset \Om$ let $T_\Lambda=\displaystyle \bigcup_{\om \in \Lambda}T_\om$ and note that
 $T_\Lambda$ is a (not necessarily regular) subtree of $T_6$. Also note that $T_\Lambda$ is $U$
 invariant (since each $T_\om$ is so) and the restriction of $U$ onto $T_\Lambda$ gives the universal 
 group $U_\Lambda$.
 
 \begin{prop}
  If $\Lambda \subset \Om \setminus \Om_0$  
  then with the action onto $T_\Lambda$, $U_\Lambda$
  is a weakly near branch group.
 \end{prop}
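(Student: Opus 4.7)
The plan is to show that for every $n \geq 1$ the rigid stabilizer $Rist_{U_\Lambda}(n)$ in the action of $U_\Lambda$ on $T_\Lambda$ is nontrivial. The key resource is Theorem~\ref{U_branch}, which provides the regular branching $U''' \succcurlyeq U''' \times \cdots \times U'''$ (six factors) of $U$ on $T_6$. Iterating this inclusion $n$ times yields, for any vertex $v$ of $T_6$ at level $n$ and any $\gamma \in U'''$, an element $\tilde\gamma \in U''' \cap St_U(n)$ whose support in $T_6$ is contained in the subtree $T_v^{(6)}$ hanging below $v$ and whose section at $v$ equals $\gamma$.

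Fix $n \geq 1$, pick any $\omega \in \Lambda$ and any $u \in \{0,1\}^n$, and set $v = (u, \omega^u) \in T_\omega \subset T_\Lambda$. We first choose $\gamma \in U'''$ with $\psi_{\tau^n \omega}(\gamma) \neq 1$. Such a $\gamma$ exists: since $\Lambda \subset \Omega \setminus \Omega_0$, the shifted sequence $\tau^n \omega$ again lies in $\Omega \setminus \Omega_0$, so as noted in the proof of the preceding corollary $G_{\tau^n\omega}$ is not solvable; hence $G_{\tau^n\omega}''' = \psi_{\tau^n\omega}(U''')$ is nontrivial, and we lift any nontrivial element.

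Let $\tilde\gamma \in U'''$ be the element supplied by the iterated branching for this choice of $v$ and $\gamma$. Applying Lemma~\ref{sections_lem} to $\tilde\gamma$ and the word $u$ gives
\[
\psi_\omega(\tilde\gamma)_u \;=\; \psi_{\tau^n \omega}\bigl(\tilde\gamma_{(u, \omega^u)}\bigr) \;=\; \psi_{\tau^n \omega}(\gamma) \;\neq\; 1,
\]
so $\psi_\omega(\tilde\gamma) \neq 1$ in $G_\omega$ and $\tilde\gamma$ acts nontrivially on $T_\omega \subseteq T_\Lambda$. On the other hand, $\tilde\gamma$ fixes every vertex of $T_6 \setminus T_v^{(6)}$, so its restriction to $T_\Lambda$ fixes everything outside $T_\Lambda \cap T_v^{(6)}$, which is precisely the subtree of $T_\Lambda$ hanging below $v$. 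Consequently the image of $\tilde\gamma$ in $U_\Lambda$ is a nontrivial element of $Rist_{U_\Lambda}(v) \leq Rist_{U_\Lambda}(n)$.

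The principal subtlety is that $T_\Lambda$ is in general not a regular subtree of $T_6$ (different $T_\omega$'s coalesce along common prefixes of the $\omega$'s) and $U_\Lambda$ embeds only diagonally into $\prod_{\omega \in \Lambda} G_\omega$, so one cannot simply transplant the branch structure from a single $G_\omega$. The argument sidesteps this by producing the witness inside the ambient group $U$, where Theorem~\ref{U_branch} is clean, and then invoking Lemma~\ref{sections_lem} to guarantee that its image in at least one factor $G_\omega$ -- and hence in $U_\Lambda = U/\bigcap_{\omega \in \Lambda}\ker\psi_\omega$ -- remains nontrivial.
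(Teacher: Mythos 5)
Your argument is correct and follows essentially the same route as the paper: both produce a nontrivial element of $Rist_U(v)\cap U'''$ via the regular branching of Theorem~\ref{U_branch}, use the compatibility of sections between $U$ and $G_\om$ (Lemma~\ref{sections_lem} / Proposition~\ref{third_comm}) to see that its image in some $G_\om$, $\om\in\Lambda$, is nontrivial, and then restrict to $T_\Lambda$. The only (cosmetic) difference is direction: the paper picks a nontrivial $g\in Rist_{G_\om}(v)$ and lifts it to $Rist_U(v)$, whereas you build the witness inside $U$ first and push it down, checking nontriviality of the image with $\psi_{\tau^n\om}$.
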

 
 \begin{proof}
  Let $v\in T_\Lambda$ and let $v\in T_\om$ for some $\om\in \Lambda$.
  Let $g$ be a non-trivial element of $Rist_{G_\om}(v)$. Then by the proof of Proposition \ref{third_comm},
  there exists $h\in Rist_U(v)$ such that $\psi_\om(h)=g$. The restriction of $h$ onto
  $T_\Lambda$ gives a non-trivial element in $Rist_{U_\Lambda}(v)$.
 \end{proof}

\section{Universal groups of intermediate growth}

 The aim of this section is to show that there exists a subset $\Lambda \subset \Omega$ of cardinality $2^{\aleph_0}$
such that $U_\Lambda$ has intermediate growth.
This fact was first established in \cite{MR781246}, 
we fix some inaccuracy in the proof of this fact.

First, let us briefly recall basic notions 
related to  the growth of groups. We refer to  \cite{MR1786869,MR2894945,grig_milnor} for a detailed
account on growth and related topics.

Let $G$ be a finitely generated group and $S$ a finite generating set. 
The length of an element (with respect to $S$)
is given by $\ell_S(g)=\min\{n \mid g=s_1s_2\ldots s_n \;,\;i\in S^\pm\}$. The growth function
of $G$ (with respect to $S$) is $\gamma_{G,S}(n)=\#B(G,S,n)$ where
$B(G,S,n)=\{g \in G\mid \ell_S(g)\le n\}$ is the ball of radius $n$.
For two increasing functions $f_1,f_2$ defined on the set of natural numbers, let us write
$f_1 \preceq f_2 $ if there exists $C>0$ such that $f_1(n)\le f_2(Cn)$ for all $n$. Let us also write
$f_1\sim f_2$ if $f_1\preceq f_2$ and $f_2\preceq f_1$, which defines an equivalence relation. 
It can be observed that the
growth functions of a group with respect to different finite generating sets are $\sim$ equivalent and hence
the asymptotic behavior of the growth functions of a group is an invariant of the group.

There are three types of growth for groups: If $\gamma_G \preceq n^d$ for some $d\geq 0$ then
$G$ is said to be of polynomial growth, if $\gamma_G \sim e^n$ then it is said to have
exponential growth. If neither of this happens then the group is said to have
\textit{intermediate growth}. 

If we are talking about the growth of a marked group $(G,S)$, we will simply write $\gamma_G$
for the growth function of $G$ with respect to $S$.

 \begin{lem}\label{diag}
  Let $F=\{(G_i,S_i)\mid i \in I\}\subset \m{M}_k$ be a non-empty subset. Denote by $\gamma_F$ the
  growth function of the diagonal group $(U^{diag}_F,S^{diag}_F)$ of 
  Definition \ref{diaggp}. Then 
  
  \begin{enumerate}
   \item  For all $i\in I$,$\gamma_F(n)\ge \gamma_i(n) \; \text{for all} \; n,$
   \item If $I$ is finite then, $\displaystyle \gamma_F(n)\leq \prod_{i \in I} \gamma_i(n) \; \text{for all}
   \; n.$
  \end{enumerate}

 \end{lem}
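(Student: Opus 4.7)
The plan is to use the two natural maps relating $U^{diag}_F$ and the factors $G_i$: for each $i\in I$, the projection onto the $i$-th coordinate, and, when $I$ is finite, the diagonal embedding $U^{diag}_F\hookrightarrow \prod_{i\in I}G_i$. Both statements are really statements about how word length behaves under these maps.

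For part (1), I would observe that for each $i\in I$ the projection $\pi_i:\prod_{j\in I}G_j\to G_i$ restricts to a group homomorphism $\pi_i:U^{diag}_F\to G_i$ sending $s_j=(s^j_\ell)_{\ell\in I}$ to $s^i_j$ for every $j=1,\ldots,k$. Since $\{s^i_1,\ldots,s^i_k\}$ generates $G_i$, this is a surjective marked homomorphism. Two things now follow immediately from the marked structure: first, if $g\in U^{diag}_F$ has length at most $n$ with respect to $S^{diag}_F$, then $\pi_i(g)$ has length at most $n$ with respect to $S_i$, i.e.\ $\pi_i\bigl(B(U^{diag}_F,S^{diag}_F,n)\bigr)\subset B(G_i,S_i,n)$; second, any word $w$ of length $\le n$ in $S_i$ lifts to the word $\tilde w$ of the same length in $S^{diag}_F$, and $\pi_i(\tilde w)=w$. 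Hence the restriction of $\pi_i$ to the $n$-ball is surjective onto $B(G_i,S_i,n)$, which yields $\gamma_F(n)\ge\gamma_i(n)$.

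For part (2), I would use the embedding $U^{diag}_F\hookrightarrow\prod_{i\in I}G_i$, which is injective by definition. Combining this with the length inequality from part (1), the map
\[
\Pi:B(U^{diag}_F,S^{diag}_F,n)\longrightarrow\prod_{i\in I}B(G_i,S_i,n),\qquad g\longmapsto(\pi_i(g))_{i\in I},
\]
is well defined and injective. When $I$ is finite this immediately gives
\[
\gamma_F(n)=\bigl|B(U^{diag}_F,S^{diag}_F,n)\bigr|\le\prod_{i\in I}\bigl|B(G_i,S_i,n)\bigr|=\prod_{i\in I}\gamma_i(n).
\]

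There is essentially no obstacle here: both bounds are formal consequences of the definition of $U^{diag}_F$ as the subgroup of $\prod_{i\in I}G_i$ diagonally generated by the $s_j$'s, together with the elementary fact that a marked homomorphism cannot increase word length. The only point one has to be careful about is that in part (2) the finiteness of $I$ is essential, since otherwise the product on the right can easily be infinite even when each $\gamma_i(n)$ is finite.
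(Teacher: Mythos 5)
Your proof is correct and follows essentially the same route as the paper: part (1) comes from the fact that each $(G_i,S_i)$ is a marked image of the diagonal group (so balls surject onto balls), and part (2) from the inclusion $B(U^{diag}_F,S^{diag}_F,n)\subset\prod_{i\in I}B(G_i,S_i,n)$ together with finiteness of $I$.
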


 \begin{proof}
 In general, if $(H,K)$ is a marked image of $(G,S)$, then
 $\gamma_{G}(n)\ge \gamma_{H}(n)$ for every $n$. Since all  $(G_i,S_i)$ are marked images
 of the diagonal group, we obtain the first assertion.
 For the second assertion, observe that $B(U^{diag}_F,S^{diag}_F,n)\subset \prod_{i\in I}B(G_i,S_i,n)$.
 \end{proof}

For a positive integer $M$ let $\Om_M\subset \Om_\infty$ be the set of all sequences  for which every subword of length $M$ contains 
all symbols $0,1,2$. 

\begin{thm}\cite[Theorem 3.3]{MR764305} \label{upperbound}
 There exist constants $C$ and $\alpha<1$ depending only on $M$, such that if $\om \in \Om_M$ then
 $$\gamma_{G_\om}(n)\le C^{n^\alpha}\; \text{for all}\; n .$$
\end{thm}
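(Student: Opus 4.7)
The plan is to use the self-similar structure of $G_\omega$ via the monomorphism
$$\phi_\omega \colon G_\omega \hookrightarrow S_2 \ltimes (G_{\tau\omega} \times G_{\tau\omega})$$
of the previous subsection; iterating it $M$ times yields an injection $\phi_\omega^{(M)} \colon G_\omega \hookrightarrow W_M \ltimes G_{\tau^M\omega}^{\,2^M}$, where $W_M$ is the finite automorphism group of the depth-$M$ binary tree. The heart of the proof is a length-contraction lemma: there exist $\eta = \eta(M) \in (0,1)$ and $K = K(M) > 0$ such that for every $\omega \in \Omega_M$ and every $g \in G_\omega$ with level-$M$ sections $(g_v)_{v \in \{0,1\}^M}$,
$$\sum_{v \in \{0,1\}^M} |g_v|_{G_{\tau^M\omega}} \;\le\; \eta\,|g|_{G_\omega} + K.$$
This is the step I expect to be the main obstacle.

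To establish the contraction I would put $g$ in reduced alternating form $a^{\epsilon_0} x_1 a x_2 a \cdots a x_k a^{\epsilon_1}$ with $x_i \in \{b_\omega, c_\omega, d_\omega\}$ (so $|g|$ and $2k$ differ by at most one), and trace each $x_i$ through $M$ iterations of the sectioning formulas of Section \ref{grig2gps}. From the recursive definitions, the slot-$0$ section of $b_\omega, c_\omega, d_\omega$ is the identity precisely when $\omega_1 = 2, 1, 0$ respectively, while the slot-$1$ section is always the same letter at the shift $\tau\omega$. Hence each of $b, c, d$ has a unique \emph{killing letter} in $\{0,1,2\}$, and the hypothesis $\omega \in \Omega_M$ guarantees that all three killings occur within $\omega_1, \ldots, \omega_M$. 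A careful combinatorial accounting of how the $a$'s permute slots and how killed sections prune the binary tree of contributions (following the strategy of the proof in \cite{MR764305}) converts this qualitative fact into the quantitative bound above with an explicit $\eta(M) < 1$.

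Given the contraction lemma, the growth bound follows by a standard self-similar recursion. Set $\gamma^*(n) = \sup_{\omega \in \Omega_M} \gamma_{G_\omega}(n)$; since $\Omega_M$ is shift-invariant we have $\tau^M\omega \in \Omega_M$ whenever $\omega \in \Omega_M$, so each $g \in G_\omega$ with $|g| \le n$ is determined by its image in the finite group $W_M$ together with its tuple of level-$M$ sections, whose coordinates sum to at most $\eta n + K$. This yields
$$\gamma^*(n) \;\le\; |W_M|\cdot(\eta n + K + 1)^{2^M}\cdot \sup_{\sum_v n_v \le \eta n + K}\,\prod_{v}\gamma^*(n_v).$$
Substituting the ansatz $\gamma^*(n) \le D^{n^\alpha}$ and noting that $\sum_v n_v^\alpha$ with $\sum_v n_v$ fixed and $0 < \alpha \le 1$ is maximized at the equal partition (so $\prod_v \gamma^*(n_v) \le D^{2^{M(1-\alpha)}(\eta n + K)^\alpha}$), the recursion reduces asymptotically to $1 \ge 2^{M(1-\alpha)}\eta^\alpha$, equivalently
$$\alpha \;\ge\; \frac{M \log 2}{M \log 2 + \log(1/\eta)},$$
which is strictly less than $1$ because $\eta \in (0,1)$. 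Taking $\alpha$ equal to this ratio and choosing $D$ large enough to absorb polynomial prefactors and the base case, one obtains $\gamma_{G_\omega}(n) \le D^{n^\alpha}$ uniformly in $\omega \in \Omega_M$, as claimed.
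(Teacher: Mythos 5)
The paper does not prove this statement --- it is quoted verbatim from \cite[Theorem 3.3]{MR764305} --- so what you have written is a reconstruction of the cited proof rather than something to compare against an argument in the text. Your skeleton is the right one and matches the source: a length-contraction inequality for the level-$M$ sections, followed by the self-similar recursion. The recursion half is handled correctly (faithfulness of the action justifies counting $g$ by its portrait down to level $M$ plus its sections, $\Omega_M$ is shift-invariant so the induction closes, concavity of $t\mapsto t^\alpha$ gives the equal-partition bound, and the critical exponent $\alpha = M\log 2/(M\log 2 + \log(1/\eta))<1$ is the right one).

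The gap is in the justification you offer for the contraction lemma, which you rightly flag as the crux. The observation that each of $b,c,d$ has a killing symbol and that all three symbols occur in $\omega_1\cdots\omega_M$ does not by itself yield $\eta<1$. If you only count letters in the (unreduced) section words, each of the $k$ generators from $\{b,c,d\}$ in $g$ propagates to exactly one $\{b,c,d\}$-letter in some level-$M$ section, plus one letter from $\{a,e\}$ produced at level $M$; the $a$'s produced at levels $<M$ are consumed as permutations and do not reach level $M$. Hence the naive count gives only $\sum_{|v|=M}|g_v|\le 2k\le |g|+O(2^M)$, i.e.\ $\eta=1$, and the kills at levels $1,\dots,M-1$ are invisible in this count. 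The actual contraction comes from \emph{reducing} the section words at each intermediate level: every identity contributed by a killed letter brings its neighbours in the section word into contact, and one must show that a definite proportion of these contacts merge or cancel generators (e.g.\ two adjacent letters of $\{b,c,d\}$ collapsing to one via $bc=d$, or $aa=1$), so that the number of $\{b,c,d\}$-letters in the reduced sections drops by a constant factor over $M$ levels. That case analysis is the real content of \cite[Theorem 3.3]{MR764305} and is exactly the step your sketch delegates to ``a careful combinatorial accounting''; as written, the proposal reduces the theorem to this lemma but does not prove it.
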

Given natural numbers $r_1,\ldots,r_k$ let 
$$\Lambda_{r_1,\ldots,r_k}=\left\{(012)^{r_1}\eta_1(012)^{r_2}\eta_2\ldots(012)^{r_k}\eta_k(012)^\infty
\mid \eta_i \in \{0,1,2\}\right\}\subset \Om. $$
 where $(012)^\infty$ stands for the periodic sequence $012012012\ldots$.

 For  a sequence of natural numbers $\mathbf{r}=\{r_k\}$, let 
$$\Lambda_{\mathbf{r}}=\left \{(012)^{r_1}\eta_1(012)^{r_2}\eta_2\ldots(012)^{r_k}\eta_k\ldots
\mid \eta_i \in \{0,1,2\}\right \}\subset \Om. $$
Note that both $\Lambda_{r_1,\ldots,r_k}$ and $\Lambda_{\mathbf{r}}$ are subsets of $\Om_4$. Let
 us denote the universal groups $U_{\Lambda_{r_1,\ldots,r_k}}$ and $U_{\Lambda_{\mathbf{r}}}$
 by $U_{r_1,\ldots,r_k}$ and $U_{\mathbf{r}}$  respectively.  Let $\gamma_{r_1,\ldots,r_k}$
 and $\gamma_{\mathbf{r}}$ denote the growth functions (with respect to the canonical generating sets)
 of $U_{r_1,\ldots,r_k}$ and $U_{\mathbf{r}}$ 
 respectively.

 \begin{lem}\label{upper} Given natural numbers $r_1,\ldots,r_k$, there exists a natural number $m$ such
that  $$\gamma_{r_1,\ldots,r_k,x}(m)\leq \left(1+\frac{1}{k}\right)^m \; \text{for any}\; x\in \f{N} .$$
 
\end{lem}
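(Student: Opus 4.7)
The plan is to bound the growth of $U_{r_1,\ldots,r_k,x}$ by presenting it as the diagonal of a \emph{finite} family of groups $(G_\om,S_\om)$ with $\om\in\Om_4$, and then invoking the uniform sub-exponential estimate of Theorem \ref{upperbound}.

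First observe that $\Lambda_{r_1,\ldots,r_k,x}$ is parametrized by the tuple $(\eta_1,\ldots,\eta_{k+1})\in\{0,1,2\}^{k+1}$, and is therefore finite of cardinality $3^{k+1}$. Moreover $\Lambda_{r_1,\ldots,r_k,x}\subset\Om_4$: any two consecutive insertion symbols $\eta_i,\eta_{i+1}$ are separated by a full block $(012)^{r_{i+1}}$ (or $(012)^x$), so every subword of length $4$ either lies inside a $(012)$-block or contains exactly one $\eta$ surrounded by letters from the $(012)$-blocks; in each case it contains all three symbols $0,1,2$.

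Next, since the universal group of a family coincides (as a marked group) with the diagonal group of the same family (see the remark after Definition \ref{diaggp}), Lemma \ref{diag}(2) applied to the finite family $\{(G_\om,S_\om)\mid\om\in\Lambda_{r_1,\ldots,r_k,x}\}$ gives
$$\gamma_{r_1,\ldots,r_k,x}(n)\;\leq\;\prod_{\om\in\Lambda_{r_1,\ldots,r_k,x}}\gamma_{G_\om}(n).$$
Now apply Theorem \ref{upperbound} with $M=4$: there exist constants $C>1$ and $\alpha<1$ depending only on $M$ (and hence universal in the rest of the argument) such that $\gamma_{G_\om}(n)\leq C^{n^\alpha}$ for every $\om\in\Om_4$. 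Combining,
$$\gamma_{r_1,\ldots,r_k,x}(n)\;\leq\;C^{3^{k+1}\,n^\alpha}.$$
The crucial feature is that this right-hand side is independent of $x$.

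Finally, it suffices to choose $m$ so that $C^{3^{k+1} m^\alpha}\leq(1+1/k)^m$, i.e.
$$m^{1-\alpha}\;\geq\;\frac{3^{k+1}\log C}{\log(1+1/k)},$$
which holds for all sufficiently large $m$ since $1-\alpha>0$. Any such $m$ depends only on $k$ and on the universal constants $C,\alpha$ from Theorem \ref{upperbound}, so a single $m$ works simultaneously for every $x\in\f{N}$. The only substantive point in the argument is precisely this uniformity of $C,\alpha$ across $\Om_4$; with that in hand everything else reduces to counting $|\Lambda_{r_1,\ldots,r_k,x}|=3^{k+1}$ and an elementary asymptotic comparison.
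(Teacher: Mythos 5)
Your proof is correct and follows essentially the same route as the paper: note that $\Lambda_{r_1,\ldots,r_k,x}$ is a subset of $\Om_4$ of cardinality $3^{k+1}$, apply the uniform bound $C^{n^{\alpha}}$ of Theorem \ref{upperbound} together with Lemma \ref{diag}(2) to get $\gamma_{r_1,\ldots,r_k,x}(n)\le C^{3^{k+1}n^{\alpha}}$ independently of $x$, and then pick $m$ large enough that this subexponential bound drops below $(1+1/k)^m$. The only addition beyond the paper's argument is your explicit verification that the sequences lie in $\Om_4$, which the paper merely asserts (and which, as you implicitly use, relies on the blocks $(012)^{r_i}$ being nonempty).
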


\begin{proof}
 Since $\Lambda_{r_1,\ldots,r_k,x} \subset \Om_4$, by Theorem
 \ref{upperbound} there exists $C$ and $\alpha<1$ (not depending on $x$) such that for all 
 $\om \in \Lambda_{r_1,\ldots,r_k,x} $ we have
 $$\gamma_\om (n)\le C^{n^\alpha} \; \text{for all} \; n.$$
Therefore, by Lemma \ref{diag} (using the fact that $|\Lambda_{r_1,\ldots,r_k,x}|=3^{k+1}$)
we have $$\gamma_{r_1,\ldots,r_k,x}(n)\leq (C^{n^\alpha})^{3^{k+1}}=D^{n^\alpha}\; \text{for all} \; n$$
where $D=C^{3^{k+1}}$ does not depend on $x$. Therefore there exists  a natural number $m$ such that 
$$\gamma_{r_1,\ldots,r_k,x}(m)\leq \left(1+\frac{1}{k}\right)^m \; \text{for any}\; x\in \f{N} .$$
 \end{proof}

  \begin{lem}\cite[Lemma 3]{MR781246}\label{limit} Let $\mathbf{r}=\{r_k\}$ be a sequence  of natural numbers.
 If for some $k$ 
 $$k+r_1+r_2+\ldots+r_k \ge \log_2 2n $$
 then  $\gamma_{r_1,\ldots,r_k}(n)=\gamma_{\mathbf{r}}(n)$.
  \end{lem}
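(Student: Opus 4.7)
The strategy is to reduce $\gamma_{r_1,\ldots,r_k}(n)=\gamma_{\mathbf{r}}(n)$ to the statement that $N_{\Lambda_{r_1,\ldots,r_k}}$ and $N_{\Lambda_{\mathbf{r}}}$ have the same intersection with the free-group ball $B_{F_4}(2n)$. Indeed, both $U_{r_1,\ldots,r_k}$ and $U_{\mathbf{r}}$ are quotients of $F_4$ via the same canonical map on generators, so two words $u,v\in F_4$ of length $\le n$ give equal elements iff $uv^{-1}$ (of length $\le 2n$) lies in the relevant normal subgroup; thus the ball of radius $n$ depends only on the intersection of this normal subgroup with $B_{F_4}(2n)$.

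To establish the coincidence of these intersections, I would pair each $\omega\in\Lambda_{\mathbf{r}}$ with parameters $(\eta_1,\eta_2,\ldots)$ to the unique $\omega'\in\Lambda_{r_1,\ldots,r_k}$ having the same first $k$ parameters $(\eta_1,\ldots,\eta_k)$; conversely, every $\omega'\in\Lambda_{r_1,\ldots,r_k}$ is paired in this way with many $\omega\in\Lambda_{\mathbf{r}}$ (one for each choice of tail parameters). Any such pair $(\omega,\omega')$ agrees on the first $3(r_1+\ldots+r_k)+k \ge k+r_1+\ldots+r_k \ge \log_2 2n$ letters, by hypothesis.

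The key ingredient is the following \emph{short-word principle}: if $\omega|_L=\omega'|_L$ in $\Omega$ and $w\in F_4$ has $|w|\le 2^L$, then $w\in N_\omega \Longleftrightarrow w\in N_{\omega'}$. Granted this, the lemma is immediate: for $w$ with $|w|\le 2n$ and $w\in N_{\Lambda_{\mathbf{r}}}$, any $\omega'\in\Lambda_{r_1,\ldots,r_k}$ has a paired $\omega\in\Lambda_{\mathbf{r}}$ with agreement length $L\ge\log_2 2n$, so $2^L\ge 2n\ge|w|$ and the short-word principle transfers $w\in N_\omega$ to $w\in N_{\omega'}$; hence $w\in N_{\Lambda_{r_1,\ldots,r_k}}$. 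The symmetric argument, starting from $\omega\in\Lambda_{\mathbf{r}}$ and selecting a paired $\omega'$, gives the reverse inclusion.

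The main obstacle is proving the short-word principle. The easy half is automatic: the generators $a,b_\omega,c_\omega,d_\omega$ depend on $\omega$ only through $\omega|_L$ when acting on the first $L$ levels of $T_2$, so any word in $F_4$ acts identically on the depth-$L$ tree as an element of $G_\omega$ or of $G_{\omega'}$. The hard half, passing from triviality on depth $L$ to global triviality of a short word, relies on the branch-group length-contraction machinery from \cite{MR764305}: elements $g\in\mathrm{St}(1)\subset G_\omega$ of length $m$ restrict via $\phi_\omega$ to pairs $(g_0,g_1)\in G_{\tau\omega}\times G_{\tau\omega}$ of total length $\le m+O(1)$, and iterating down to level $L$ together with $|g|\le 2^L$ and the fact that $\omega\in\Omega_\infty$ (ensured by $\Lambda_{r_1,\ldots,r_k},\Lambda_{\mathbf{r}}\subset\Omega_4$) forces the level-$L$ restrictions of $g$ to be trivial, hence $g=1$ in $G_\omega$. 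Turning this length-contraction heuristic into the sharp quantitative bound $2^L\ge|g|$ is the heart of the proof.
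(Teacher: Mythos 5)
The paper offers no proof of this lemma --- it is quoted verbatim from \cite{MR781246} --- so the comparison is really with Grigorchuk's original argument, and your outline reproduces its architecture faithfully. The reduction of $\gamma_{r_1,\ldots,r_k}(n)=\gamma_{\mathbf{r}}(n)$ to the equality $N_{\Lambda_{r_1,\ldots,r_k}}\cap B_{F_4}(2n)=N_{\Lambda_{\mathbf{r}}}\cap B_{F_4}(2n)$ is correct (two words of length $\le n$ are identified in a marked quotient iff their quotient, of length $\le 2n$, lies in the kernel), and the pairing of each $\omega\in\Lambda_{\mathbf{r}}$ with the $\omega'\in\Lambda_{r_1,\ldots,r_k}$ sharing the parameters $\eta_1,\ldots,\eta_k$, with common prefix of length $L=3(r_1+\cdots+r_k)+k\ge\log_2 2n$, is exactly the right bookkeeping; the two inclusions then follow symmetrically from your short-word principle.

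The genuine gap is the one you flag yourself: the short-word principle (agreement of $\omega,\omega'$ on the first $L$ letters plus $|w|\le 2^L$ implies $w\in N_\omega\Leftrightarrow w\in N_{\omega'}$) is the entire content of the lemma, and you leave its quantitative form as a heuristic. It is true, and the proof is the standard one: first reduce $w$ modulo the universal relations $a^2=b^2=c^2=d^2=bcd=1$ (valid in every $G_\omega$) to an alternating form; whether $w$ stabilizes the first level is $\omega$-independent, and if it does, its two sections are words of length $\le(|w|+1)/2$ whose rewriting depends only on $\omega_1$; iterating, after $j$ levels the $2^j$ sections have length $\le |w|/2^j+1$ and have been computed using only $\omega_1\cdots\omega_j$, and once the lengths drop to $\le 2$ their triviality is again $\omega$-independent (no generator degenerates, since all shifts of sequences in $\Omega_4$ keep $b,c,d$ nontrivial, and $xy\ne 1$ for distinct $x,y\in\{a,b,c,d\}$). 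The additive $+1$ in the contraction and the final two levels needed to kill words of length $\le 2$ consume a couple of extra letters of agreement beyond $\log_2|w|$; this is where you must use that the actual common prefix has length $3\sum_{i\le k} r_i+k\ge\log_2 2n+2k$, i.e.\ the slack of at least $2$ over the stated hypothesis. Without carrying out this estimate the proposal proves nothing beyond the (easy) reduction, so as written it is incomplete, though the route is the correct one.
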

  
  \begin{thm}\cite[Theorem 1]{MR781246} There exists a sequence $\mathbf{r}=\{r_k\}$ such that 
$U_{\mathbf{r}}$
has intermediate growth.
\end{thm}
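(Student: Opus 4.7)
My plan is to separate the two defining conditions of intermediate growth: super-polynomial growth from below and sub-exponential growth from above. The lower bound is immediate regardless of the choice of $\mathbf{r}$. Every $\om \in \Lambda_{\mathbf{r}}$ has the form $(012)^{r_1}\eta_1(012)^{r_2}\eta_2\cdots$, so all three letters appear infinitely often and $\om \in \Om_\infty$. By Theorem \ref{grig84}(3), $G_\om$ then has intermediate (in particular super-polynomial) growth, and since $U_{\mathbf{r}}$ surjects onto $G_\om$ this lower bound is inherited. The entire task therefore reduces to choosing $\mathbf{r}$ so that $\gamma_{\mathbf{r}}$ is sub-exponential.

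For the upper bound, the strategy is to build $\mathbf{r}$ recursively so that on a nested sequence of intervals $[M_k,M_{k+1}]$ one has $\gamma_{\mathbf{r}}(n)\le (1+1/k)^n$, forcing $\gamma_{\mathbf{r}}(n)^{1/n}\to 1$. The crucial preparatory observation --- which I suspect is exactly the point where the proof of \cite{MR781246} is being corrected --- is that the number $m$ appearing in Lemma \ref{upper} can be chosen depending only on $k$, and not on $r_1,\ldots,r_k$ or on $x$. Indeed, the proof of that lemma gives $\gamma_{r_1,\ldots,r_k,x}(n)\le D_k^{n^\al}$ with $D_k=C^{3^{k+1}}$, where $C$ and $\al<1$ come solely from Theorem \ref{upperbound} applied to $\Om_4$; solving $D_k^{n^\al}\le (1+1/k)^n$ then yields a threshold $M_k$ depending only on $k$. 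I would fix such thresholds once and for all, arranging $M_1\le M_2\le\cdots\to\infty$ without loss of generality.

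With the $M_k$ in hand, I choose $r_1,r_2,\ldots$ recursively as follows: take $r_1=1$, and having selected $r_1,\ldots,r_k$, pick $r_{k+1}$ so large that
$$(k+1)+r_1+\cdots+r_{k+1}\ \ge\ \log_2(2M_{k+1}),$$
which is possible precisely because $M_{k+1}$ is independent of $r_{k+1}$. Lemma \ref{limit} then yields $\gamma_{\mathbf{r}}(n)=\gamma_{r_1,\ldots,r_{k+1}}(n)$ for all $n\le M_{k+1}$, and Lemma \ref{upper} applied to $r_1,\ldots,r_k$ with $x=r_{k+1}$ gives $\gamma_{r_1,\ldots,r_{k+1}}(n)\le (1+1/k)^n$ for $n\ge M_k$. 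Combining, $\gamma_{\mathbf{r}}(n)\le (1+1/k)^n$ on the whole interval $[M_k,M_{k+1}]$. For any large $n$ the unique index $k$ with $M_k\le n<M_{k+1}$ tends to infinity with $n$, so $\limsup_n \gamma_{\mathbf{r}}(n)^{1/n}\le 1$, producing sub-exponential growth. Together with the super-polynomial lower bound this gives intermediate growth.

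The main obstacle is the apparent circularity in the construction: if $M_k$ genuinely depended on $r_1,\ldots,r_k$, then enlarging $r_{k+1}$ to force the Lemma \ref{limit} hypothesis could simultaneously enlarge $M_{k+1}$ and defeat the recursion. The preparatory observation that $M_k$ can be made $r$-independent is exactly what defuses this, and everything beyond it is straightforward bookkeeping.
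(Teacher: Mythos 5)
Your proof is correct and follows essentially the same strategy as the paper's: the uniformity in $x$ of Lemma \ref{upper} is exactly the point that breaks the circularity, and the recursion alternating between that lemma and Lemma \ref{limit} is the paper's argument. Your variations --- making the thresholds $M_k$ independent of $r_1,\ldots,r_k$ and covering all large $n$ by intervals $[M_k,M_{k+1}]$ rather than evaluating along the subsequence $n_k$ (and explicitly noting the super-polynomial lower bound) --- are harmless refinements of the same proof.
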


\begin{proof}
 Let $r_1=1$. By Lemma \ref{upper}, there exists a natural number $n_1$ such that 
 $$\gamma_{r_1,x}(n_1)\leq \left( 1+\frac{1}{1}\right)^{n_1} \: \text{for any}\; x.$$
 Choose $r_2$ such that $2+r_1+r_2 \ge \log_2 2n_1$. Again by Lemma \ref{upper} there
 exists $n_2>n_1$ such that 
  $$\gamma_{r_1,r_2,x}(n_2)\leq \left( 1+\frac{1}{2}\right)^{n_2} \: \text{for any}\; x.$$
  Assume $r_1,\ldots,r_k$ has been already chosen. 
  By Lemma \ref{upper}, there exists $n_{k}>n_{k-1}$ such that
\begin{equation}\label{1}
 \gamma_{r_1,\ldots,r_{k},x}(n_k) \leq \left( 1+\frac{1}{k}\right)^{n_k} \: \text{for any}\; x. 
\end{equation}
Choose $r_{k+1}$ such that 
  \begin{equation}\label{2}
   k+1+r_1+\ldots+r_{k+1}\ge \log_2 2n_k. 
  \end{equation}
Continuing in this manner we construct  sequences $\mathbf{r}=\{r_k\}$ and $\{n_k\}$ for which Equations \ref{1} and \ref{2}
are satisfied. Lemma \ref{limit} and Equation \ref{2} shows that for all $k$ we have 
$$\gamma_{r_1,\ldots,r_{k+1}}(n_k)=\gamma_{\mathbf{r}}(n_k).$$
 Using this and Equation \ref{1} we have,
 $$\lim_{n\rightarrow \infty} \gamma_{\mathbf{r}}(n)^\frac{1}{n}=
 \lim_{k\rightarrow \infty} \gamma_{\mathbf{r}}(n_k)^\frac{1}{n_k}=
 \lim_{k\rightarrow \infty} \gamma_{r_1,\ldots,r_{k+1}}(n_k)^\frac{1}{n_k}
 \leq  \lim_{k\rightarrow \infty}\left( 1+\frac{1}{k}\right)=1 $$
 \end{proof}

\begin{cor}
 There exists a finitely generated group of intermediate growth with $2^{\aleph_0}$ non-isomorphic
 homomorphic images.
\end{cor}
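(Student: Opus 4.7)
The plan is to take the sequence $\mathbf{r} = \{r_k\}$ produced by the preceding theorem: the group $U_{\mathbf{r}}$ is finitely generated and of intermediate growth, so the only remaining task is to exhibit $2^{\aleph_0}$ pairwise non-isomorphic homomorphic images of $U_{\mathbf{r}}$. The natural candidates are the groups $G_{\omega}$ themselves for $\omega$ ranging over $\Lambda_{\mathbf{r}}$. By construction, $U_{\mathbf{r}} = F_4 / \bigcap_{\omega \in \Lambda_{\mathbf{r}}} N_{(G_\omega,S_\omega)}$, so each $G_\omega$ with $\omega \in \Lambda_{\mathbf{r}}$ is a marked quotient of $U_{\mathbf{r}}$; thus it suffices to count isomorphism classes in the family $\{G_\omega\}_{\omega \in \Lambda_{\mathbf{r}}}$.

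First I would note that $\Lambda_{\mathbf{r}}$ is parameterized by sequences $(\eta_i) \in \{0,1,2\}^{\mathbb{N}}$, so $|\Lambda_{\mathbf{r}}| = 3^{\aleph_0} = 2^{\aleph_0}$. Moreover $\Lambda_{\mathbf{r}} \subset \Omega_4 \subset \Omega_\infty$, so Theorem \ref{grig84}(7) applies and tells us that for $\omega_1, \omega_2 \in \Lambda_{\mathbf{r}}$, $G_{\omega_1} \cong G_{\omega_2}$ if and only if $\omega_2 = \sigma \cdot \omega_1$ for some $\sigma \in \mathrm{Sym}(\{0,1,2\})$ acting letter-by-letter.

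Next I would observe that the letter-wise action of $\mathrm{Sym}(\{0,1,2\})$ on $\{0,1,2\}^{\mathbb{N}}$ has all orbits of size at most $6 = |\mathrm{Sym}(\{0,1,2\})|$. Therefore the equivalence relation ``$G_{\omega_1} \cong G_{\omega_2}$'' partitions $\Lambda_{\mathbf{r}}$ into classes of size at most $6$, which yields at least $2^{\aleph_0}/6 = 2^{\aleph_0}$ distinct isomorphism classes of groups $G_\omega$, each of which is a quotient of $U_{\mathbf{r}}$. This completes the proof.

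There is no serious obstacle here; the argument is essentially an aggregation of the previous theorem with the isomorphism-classification result from \cite{MR764305}. The only point requiring a little care is the verification that the sequences of $\Lambda_{\mathbf{r}}$ really lie in $\Omega_\infty$ (so that part (7) of Theorem \ref{grig84} is applicable), which is immediate from the inclusion $\Lambda_{\mathbf{r}} \subset \Omega_4$ recorded in the discussion preceding Lemma \ref{upper}.
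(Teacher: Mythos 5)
Your argument is correct and is exactly the intended (unwritten) proof in the paper: $U_{\mathbf{r}}$ has intermediate growth, surjects onto each $G_\omega$ for $\omega\in\Lambda_{\mathbf{r}}$, and since $\Lambda_{\mathbf{r}}\subset\Omega_4\subset\Omega_\infty$ has cardinality $2^{\aleph_0}$ while Theorem \ref{grig84}(7) bounds each isomorphism class by the size-$6$ orbits of the letter-wise $Sym(\{0,1,2\})$-action, there are $2^{\aleph_0}$ non-isomorphic quotients. No gaps.
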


As mentioned in the beginning of this section, this fact was established in \cite{MR781246} with a small inaccuracy.
Our proof mainly follows the lines of \cite{MR781246} only difference being that one needs
Lemma \ref{upper}.

\section{Invariant random subgroups of universal groups}

The aim of this section is to show that there are universal groups with many invariant random subgroups.

\subsection{Preliminaries About Invariant Random Subgroups}\mbox{}

\medskip

Let $G$ be a countable group and let $S(G)$ be the space of subgroups of $G$ endowed with the 
topology having sets of the form $\m{O}_{A,B}=\{N\leq G \mid A\subset N,\;B\cap N=\varnothing\}$
where $A,B$ are finite subsets of $G$, as basis.. $S(G)$ can be identified with a closed subspace
of $\{0,1\}^G$ supplied with by the topology induced from the Tychonoff topology.
The group $G$ acts on $S(G)$ by conjugation and hence forming a topological dynamical system
$(G,S(G))$. We are interested in dynamical system of the form $(G,S(G),\mu)$ where
$\mu$ is a  conjugation invariant  probability measure on $S(G)$.

\begin{defn}
 A conjugation invariant Borel probability measure on $S(G)$ is called an invariant random subgroup (IRS in short).
\end{defn}

The  space  $S(G)$  is a  compact, metrizable,  totally  disconnected   space  which  
(applying  the  Cantor-Bendixon procedure \cite[I.6]{MR1321597})  
consists  of  a  perfect  kernel $\kappa(G)$  and its  complement  $S(G) \setminus \kappa(G)$  which  is  countable.
The  perfect  kernel $\kappa(G)$  is either empty  or  is homeomorphic to a  Cantor set,  and  it  is  empty  if  
and  only  if  $S(G)$  is  countable,  that  is  $G$  has  only  countably many  subgroups.
This  is  the  case, for  instance, for finitely  generated virtually  nilpotent  groups, virtually polycyclic groups,  
some metabelian groups like Baumslag-Solitar groups $B(1,n)$, or Tarski
monsters \cite{MR571100}.

As $\kappa(G)$ is an invariant subset of $S(G)$ with respect to the action of $Aut(G)$
and as the complement $S(G)\setminus \kappa(G)$ is countable, it is clear that a continuous $IRS$ has 
law $\mu$ supported on $\kappa(G)$.

Given a subgroup 
$L\le G$, let $S(G,L)\subset S(G)$ be the set of subgroups containing $L$, which clearly is 
closed. Note that, if $L$ is a normal
subgroup of $G$, then $S(G,L)$ is  invariant under the action of $G$.

Let $\varphi: G \lr H$ be a homomorphism. It induces two  maps
 
$$ 
 \begin{array}{cccc}
  \bar \varphi : &  S(G)  & \lr      & S(H) \\
		 &   N    &  \mapsto & \varphi(N) 
 \end{array}
$$

and 
 
 $$ 
 \begin{array}{cccc}
  \widetilde \varphi : &  S(H)      & \lr      & S(G,Ker(\varphi)) \\
		 &   K  &  \mapsto & \varphi^{-1}(K) 
 \end{array}
$$
 
\begin{lem}\label{lem1}$\mbox{}$
\begin{enumerate}

  \item $\bar \varphi$ is  Borel. 
 \item $\widetilde \varphi $ is continuous.
 \item $\widetilde \varphi(K^{\varphi(g)})=\widetilde \varphi(K)^g$ for all $g\in G$ and $K\leq H$.
 \item $\widetilde \varphi^{-1}(C^g)=\widetilde \varphi^{-1}(C)^{\varphi(g)}$ 
 for all $g\in G$ and $C\subset S(G,Ker(\varphi))$.
 \item If $\varphi$ is surjective, then $\tilde \varphi$ is a homeomorphism.
\end{enumerate}
\end{lem}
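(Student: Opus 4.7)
The plan is to verify each assertion by unraveling the definitions on the clopen subbasis $\{N : w \in N\}$, $\{N : w \notin N\}$ (for $w$ in the appropriate group) that generates the topology of $S(G)$ and $S(H)$. First I would handle (2): for any $g \in G$, one computes
$$\widetilde\varphi^{-1}\bigl(\{N \in S(G,Ker(\varphi)) : g \in N\}\bigr)=\{K \in S(H) : \varphi(g) \in K\},$$
which is clopen in $S(H)$; together with the analogous statement for ``$g \notin N$'', this yields continuity of $\widetilde\varphi$. For (1) the same strategy gives, for $h \in H$,
$$\bar\varphi^{-1}\bigl(\{K \in S(H) : h \in K\}\bigr)=\bigcup_{g \in \varphi^{-1}(h)}\{N \in S(G) : g \in N\},$$
a countable union (since $G$ is countable) of clopen sets, hence an open set; the complementary subbasic set pulls back to a countable intersection of clopen sets. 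Thus $\bar\varphi$ sends a generating subbasis into the Borel $\sigma$-algebra and is Borel measurable.

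Next I would prove (3) by a direct element-wise check: $x \in \varphi^{-1}(\varphi(g)K\varphi(g)^{-1})$ iff $\varphi(g^{-1}xg) \in K$ iff $x \in g\,\widetilde\varphi(K)\,g^{-1}$. Then (4) follows formally from (3), since
$$K \in \widetilde\varphi^{-1}(C^g) \iff \widetilde\varphi(K) \in C^g \iff \widetilde\varphi\bigl(K^{\varphi(g)^{-1}}\bigr) \in C \iff K \in \widetilde\varphi^{-1}(C)^{\varphi(g)},$$
where the middle equivalence combines the identity $\widetilde\varphi(K)\in C^g \Leftrightarrow \widetilde\varphi(K)^{g^{-1}}\in C$ with an application of (3) to $g^{-1}$.

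For (5), assume $\varphi$ is surjective. Surjectivity gives $\varphi(\varphi^{-1}(K))=K$ for every $K \in S(H)$, while $Ker(\varphi) \subseteq N$ gives $\varphi^{-1}(\varphi(N))=N$ for every $N \in S(G,Ker(\varphi))$; hence $\widetilde\varphi$ is a bijection whose inverse is the restriction of $\bar\varphi$ to $S(G,Ker(\varphi))$. Continuity of $\widetilde\varphi$ is already (2); since $S(H)$ is compact (a closed subspace of the compact space $\{0,1\}^H$) and $S(G,Ker(\varphi))$ is Hausdorff (closed in $S(G)$), the continuous bijection $\widetilde\varphi$ is automatically a homeomorphism.

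No serious obstacle is expected; the only mildly delicate point lies in (1), where $\varphi^{-1}(h)$ may be infinite, so the preimage of a subbasic clopen set is merely open rather than clopen. This is precisely why the natural conclusion for $\bar\varphi$ is Borel measurability rather than continuity, in contrast with $\widetilde\varphi$.
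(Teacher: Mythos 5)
Your proof is correct and follows essentially the same route as the paper's: computing preimages of the generating clopen sets for (1) and (2) (the paper uses the basic sets $\m{O}_{A,B}$ rather than subbasic ones, which amounts to the same computation), verifying (3) element-wise, deducing (4) formally from (3), and invoking the continuous-bijection-from-compact-to-Hausdorff argument for (5). One cosmetic remark: the failure of continuity of $\bar \varphi$ stems from the sets $\{K : h\notin K\}$, whose preimages are closed but need not be open, rather than from the preimages of $\{K : h\in K\}$ merely failing to be clopen; this does not affect your argument.
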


\begin{proof}\mbox{}
\begin{enumerate}
\item We claim that $$\bar \varphi ^{-1}(\m{O}_{A,B})=\bigcap_{a\in A}\bigcup_{x\in \varphi^{-1}(a)} 
 \;\bigcap_{y\in \varphi^{-1}(B)}\m{O}_{\{x\},\{y\}}$$
 where $A,B$ are finite subsets of $H$. 
 
 If $\varphi(N)\in \m{O}_{A,B}$ then $A\subset \varphi(N)$ and $B\cap \varphi(N)=\varnothing$.
 This shows that for any $a\in A$ there exists $n_a\in N$ such that $\varphi(n_a)=a$. Also,
 for all $y\in \varphi^{-1}(B)$ we have $y\notin N$. Hence $N$ belongs to the right hand side.
 
 Conversely, let $N\le G$ belong to the right hand side. This means 
 that for all $a \in A$ there exists $n_a\in \varphi^{-1}(a)$
 such that for all $y\in\varphi^{-1}(B)$ we have $N\in \m{O}_{\{n_a\},\{y\}} $.
 For any $a\in A$, we have $\varphi(n_a)=a$ and hence  $A\subset \varphi(N)$.
 Also, if   $ B\cap \varphi(N)$ is nonempty, then the set $N \cap \varphi^{-1}(B)$ is nonempty
 which is not true. Hence $\varphi(N)\in \m{O}_{A,B}$.
 
\smallskip

 Note that in general $\bar \varphi$ is not continuous. For example, the sequence of
 subgroups $(2n+1)\mathbb{Z},n\ge 1$ of $\mathbb{Z}$ converge to the trivial subgroup, but their
 images in $\mathbb{Z}_2$ converge to the whole group.

 \item We claim that  $\widetilde \varphi^{-1}(\m{O}_{C,D})= \m{O}_{\varphi(C),\varphi(D)}$
 where $C,D$ are finite subsets of $G$. In fact, if 
 $\tilde \varphi(K)\in \m{O}_{C,D}$ for some $K\le H$, then $C\subset \varphi^{-1}(K)$ and 
 $D\cap \varphi^{-1}(K)=\varnothing$. It follows that $\varphi(C)\subset K$ and 
 $\varphi(D)\cap K=\varnothing$. This shows that $K\in \m{O}_{\varphi(C),\varphi(D)}$.
 Conversely, if $K\in \m{O}_{\varphi(C),\varphi(D)}$ for some $K\le H$, then $\varphi(C)\subset K$
 and $D\cap \varphi(K)=\varnothing$. It follows that $C\subset \varphi^{-1}(K)$ and 
 $D\cap \varphi^{-1}(K)=\varnothing$ and hence $\tilde \varphi(K)=\varphi^{-1}(K)\in \m{O}_{C,D}$.
 \item This can be verified directly.
 \item This follows from part (2).
 \item If $\varphi$ is surjective, then clearly $\tilde \varphi$ is bijective. 
 Since $S(H)$ is compact, it follows that $\tilde \varphi$ is a homeomorphism,
 
\end{enumerate}

\end{proof}

\begin{cor}\label{cor1}
 If $\mu$ is an $IRS$ of $H$ then the measure $\nu=\widetilde \varphi_\ast (\mu)$ is an $IRS$
 of $G$ supported on the set $\{\varphi^{-1}(K) \mid K\in supp(\mu)\}$. If moreover $\mu$ is continuous, ergodic
 with respect to the action of $H$ and $\varphi$ is surjective, then $\nu$ is continuous and ergodic with respect to the action 
 of $G$. 
\end{cor}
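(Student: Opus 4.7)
The plan is to invoke Lemma \ref{lem1} systematically, transferring each property of $\mu$ to $\nu$ through the map $\widetilde\varphi$. Since $\widetilde\varphi$ is continuous by Lemma \ref{lem1}(2), it is Borel, so $\nu = \widetilde\varphi_\ast(\mu)$ is a well-defined Borel probability measure on $S(G)$.

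For conjugation invariance, given any $g \in G$ and any Borel set $C \subset S(G)$, I would apply Lemma \ref{lem1}(4) to obtain $\widetilde\varphi^{-1}(C^g) = \widetilde\varphi^{-1}(C)^{\varphi(g)}$, and then invoke the $H$-invariance of $\mu$ applied to $\varphi(g) \in H$ to conclude
$$ \nu(C^g) \;=\; \mu\bigl(\widetilde\varphi^{-1}(C)^{\varphi(g)}\bigr) \;=\; \mu\bigl(\widetilde\varphi^{-1}(C)\bigr) \;=\; \nu(C).$$
Note this step does not require surjectivity of $\varphi$. The support statement follows from continuity of $\widetilde\varphi$: since $supp(\mu)$ is compact in $S(H)$, its image $\widetilde\varphi(supp(\mu))$ is compact and hence closed in $S(G)$, and by the standard characterization of the support of a pushforward measure, $supp(\nu) \subset \widetilde\varphi(supp(\mu)) = \{\varphi^{-1}(K) \mid K \in supp(\mu)\}$.

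Now assume $\mu$ is continuous and $\varphi$ is surjective, so that by Lemma \ref{lem1}(5) $\widetilde\varphi$ is a homeomorphism onto $S(G, Ker(\varphi))$. Continuity of $\nu$ is then immediate: for any singleton $\{N\} \subset S(G)$, its preimage under $\widetilde\varphi$ is either empty (if $N \not\supseteq Ker(\varphi)$) or a single point (by surjectivity), hence $\nu(\{N\}) = \mu(\widetilde\varphi^{-1}(\{N\})) = 0$ by continuity of $\mu$.

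The main step, and the place where surjectivity is essential, is ergodicity. I would take a $G$-invariant Borel set $C \subset S(G)$. Since $Ker(\varphi) \triangleleft G$, the closed subset $S(G, Ker(\varphi))$ is itself $G$-invariant, so $\widetilde\varphi^{-1}(C) = \widetilde\varphi^{-1}(C \cap S(G, Ker(\varphi)))$. Applying Lemma \ref{lem1}(4) to every $g \in G$ shows that $\widetilde\varphi^{-1}(C)$ is invariant under conjugation by each element of $\varphi(G)$, and surjectivity then promotes this to full $H$-invariance. Ergodicity of $\mu$ forces $\mu(\widetilde\varphi^{-1}(C)) \in \{0,1\}$, i.e.\ $\nu(C) \in \{0,1\}$. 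The only conceptual subtlety is keeping straight the interplay between the $G$-action on $S(G)$ and the $H$-action on $S(H)$: surjectivity of $\varphi$ is precisely what conjugates these two actions via $\widetilde\varphi$ and so transfers $H$-ergodicity of $\mu$ to $G$-ergodicity of $\nu$.
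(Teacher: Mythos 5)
Your proposal is correct and follows essentially the same route as the paper's proof: push $\mu$ forward through $\widetilde\varphi$, use the equivariance identity $\widetilde\varphi^{-1}(C^g)=\widetilde\varphi^{-1}(C)^{\varphi(g)}$ from Lemma \ref{lem1} together with $H$-invariance of $\mu$ to get $G$-invariance, use the homeomorphism of Lemma \ref{lem1}(5) for continuity, and use surjectivity of $\varphi$ to upgrade $\varphi(G)$-invariance of $\widetilde\varphi^{-1}(C)$ to $H$-invariance and hence transfer ergodicity. The only differences are cosmetic (you spell out the support and atomlessness arguments slightly more explicitly than the paper does).
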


\begin{proof}
 The first part is immediate consequence of Lemma  \ref{lem1} parts $(1)$ and $(3)$. Note
 that the measure $\widetilde \varphi_\ast (\mu)$ is defined on the closed subset $S(G,Ker(\varphi))$
 of $S(G)$,
 and hence can be considered as a measure on $S(G)$ with support in $S(G,Ker(\varphi)$.
 Suppose that $\mu$ is continuous, ergodic and $\varphi$ is surjective. Since $\tilde \varphi$
 is a homeomorphism the measure
 $\nu$ is continuous. Let $C\subset S(G,Ker(\varphi))$ be $G$-invariant. Given $h\in H$,
 pick $g\in G$ such that $\varphi(g)=h$. 
 By Lemma \ref{lem1} part (3),  $\widetilde \varphi ^{-1}(C)^h=\widetilde \varphi ^{-1}(C)^{\varphi(g)}=
 \widetilde \varphi^{-1}(C^g)=\widetilde \varphi^{-1}(C)$. Therefore $\widetilde \varphi^{-1}(C)$ is $H$
 invariant, from which it follows that $\nu(C)=\mu \left( \widetilde \varphi^{-1}(C) \right)\in\{0,1\}$.

\end{proof}



\begin{prop}\label{prop1} Let $X$ be a metrizable Hausdorff topological space and let 
$\mu$ be a Borel measure on $X$.
 Suppose also that a group  $G$ acts on the Borel space $(X,\mu)$ by measure preserving transformations. 
 Then the  map $St: X \lr S(G)$ given by $x\mapsto St_G(x)$  is Borel. Moreover, the measure
 $\nu=St_\ast(\mu)$ is an $IRS$ supported on $\{St_G(x)\mid x\in X\}$.
\end{prop}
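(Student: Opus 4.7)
The plan is to verify the three claims in turn: (i) $St$ is Borel measurable, (ii) $\nu=St_\ast(\mu)$ is invariant under the conjugation action of $G$ on $S(G)$, and (iii) $\nu$ is concentrated on the image of $St$. The only step that needs real care is (i), since on $X$ we only have a measurable action, not a topological one; here the Hausdorff hypothesis on $X$ will be used.

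For (i), I would reduce to checking preimages of a countable basis. Since $G$ is countable, the sets $\m{O}_{A,B}$ with $A,B$ finite subsets of $G$ form a countable basis of the topology of $S(G)$, so it is enough to show that each preimage
\[
St^{-1}(\m{O}_{A,B})=\bigcap_{a\in A}\{x\in X:ax=x\}\;\cap\;\bigcap_{b\in B}\{x\in X:bx\neq x\}
\]
is Borel. For any fixed $g\in G$, the map $x\mapsto(gx,x)$ is Borel since $g$ acts as a Borel transformation of the Borel space $(X,\mu)$. Because $X$ is Hausdorff, the diagonal $\Delta\subset X\times X$ is closed, so the fixed-point set $F_g=\{x:gx=x\}$ is the Borel preimage of $\Delta$. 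As $A$ and $B$ are finite, $St^{-1}(\m{O}_{A,B})$ is a finite boolean combination of Borel sets, and hence Borel.

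For (ii), the key is the equivariance identity $St_G(gx)=g\,St_G(x)\,g^{-1}$ which holds tautologically for every $g\in G$ and $x\in X$. Writing $\phi_g:X\to X$ for the action of $g$ and $c_g:S(G)\to S(G)$ for conjugation by $g$, this reads $St\circ\phi_g=c_g\circ St$. Since $\phi_g$ preserves $\mu$, we obtain
\[
(c_g)_\ast\nu=(c_g)_\ast St_\ast\mu=(St\circ\phi_g)_\ast\mu=St_\ast(\phi_g)_\ast\mu=St_\ast\mu=\nu,
\]
so $\nu$ is conjugation-invariant, i.e.\ an $IRS$.

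For (iii), set $Y=\{St_G(x)\mid x\in X\}=St(X)$. Since $St$ is Borel and $X$ is a Borel set, $Y$ is an analytic subset of $S(G)$, hence universally measurable, and
\[
\nu(Y)=\mu\bigl(St^{-1}(Y)\bigr)=\mu(X)=1,
\]
which gives the claimed support property. The anticipated main obstacle is the Borel measurability step, since neither continuity of the action nor an explicit description of $St^{-1}(\m{O}_{A,B})$ is available; the trick of writing fixed-point sets as preimages of the diagonal under the Borel map $x\mapsto(gx,x)$ is what allows one to bypass this, using only the Hausdorff property of $X$.
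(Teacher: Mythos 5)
Your proof is correct and follows essentially the same route as the paper's: both reduce Borel measurability of $St$ to showing that each fixed-point set $\{x : gx=x\}$ is Borel (where you supply the closed-diagonal argument and the paper cites a reference on sections of Borel maps), and both derive conjugation-invariance from the equivariance $St_G(g.x)=St_G(x)^{g^{-1}}$ together with the $G$-invariance of $\mu$. The only cosmetic caveat is that your analyticity remark for the image $St(X)$ implicitly wants $X$ standard Borel, but the support assertion is stated at the same informal level in the paper itself.
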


\begin{proof}
Observe that the Borel $\sigma$-algebra on $S(G)$ is generated by sets of the
 form $\m{O}_g=\{N\leq G \mid g \in N\}$. Also observe that $St^{-1}(\m{O}_g)=Fix(\varphi_g)$
  where $\varphi_g :X\to X$ given by $\varphi_g(x)=g.x$. Therefore $St^{-1}(\m{O}_g)$  is a  Borel set (see e.g. \cite[Chapter 1]{MR2169627} on sections of Borel maps). 
  This shows that the measure $\nu=St_\ast(\mu)$ is a Borel measure on $S(G)$ 
  with support $\{St_G(x)\mid x\in X\}$. The relation $St_G(g.x)=St_G(x)^{g^{-1}}$ and the $G$
  invariance of $\mu$ show that $\nu$ is conjugation invariant.
 
\end{proof}
It is known (see \cite{1201.3399}) that every $IRS$ of a finitely generated group arises from
a measure preserving action on a Borel probability space $(X,\mu)$.

\medskip

If $T_d$ is the rooted $d$-ary tree, its \emph{boundary} $\partial T_d$ is the set of all infinite rays 
emanating from the root vertex. $\partial T_d$ is in bijection with infinite sequences over the 
alphabet $\{0,1,\ldots,d-1\}$ and hence homeomorphic to a Cantor Set. If $G$ is a group of automorphisms
of a rooted tree $T_d$, its action on $T_d$ extends to an action onto the boundary $\partial T_d$
and this action is by homeomorphisms. Let $\mu$ be the  uniform Bernoulli measure on $\partial T_d$, 
(i.e., the
product of uniform measures on the set $\{0,1,\ldots,d-1\}$). Observe that $\mu$ 
is continuous and invariant under the action
of $Aut(T_d)$ and hence invariant under the action of any subgroup $G\le Aut(T_d)$.
Regarding the the dynamics of such actions the following is known:

\begin{prop}\cite{MR2893544}\label{actiononboundary}
Let $G$ be a countable group of automorphisms of a regular rooted tree $T_d$. Then, the following
are equivalent:

\begin{enumerate}
 \item the group $G$ acts transitively on the levels of $T_d$,
 \item the action of $G$ on $\partial T_d$ is minimal (i.e., orbits are dense),
 \item the action of $G$ on $\partial T_d$ is ergodic with respect to the uniform Bernoulli measure
 on $\partial T_d$.
 \item the action is uniquely ergodic.
\end{enumerate}
\end{prop}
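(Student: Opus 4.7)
The plan is to prove the chain of implications $(1) \Rightarrow (2) \Rightarrow (1)$, $(1) \Rightarrow (4) \Rightarrow (3) \Rightarrow (1)$, exploiting the self-similar structure of $\partial T_d$ and the fact that the cylinder sets $[v] = \{\xi \in \partial T_d : v \text{ is a prefix of } \xi\}$, indexed by finite words $v \in \{0,\ldots,d-1\}^\ast$, form a clopen basis with $\mu([v]) = d^{-|v|}$. The basic observation to keep in mind throughout is that if $g \in G$ and $g(v) = w$ for vertices on the $n$-th level, then $g$ restricts to a bijection $[v] \to [w]$, and this bijection is measure-preserving since both cylinders have measure $d^{-n}$.

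For $(1) \Leftrightarrow (2)$, the point is that cylinders are a neighbourhood basis for the topology on $\partial T_d$. Assuming $(1)$, given any $\xi \in \partial T_d$ and any cylinder $[w]$ with $|w| = n$, one picks $g \in G$ sending the length-$n$ prefix of $\xi$ to $w$, and then $g\xi \in [w]$, so the orbit is dense. Conversely, under $(2)$, given two vertices $v, w$ of the same level $n$ and any $\xi \in [v]$, density of the orbit produces $g \in G$ with $g\xi \in [w]$, and reading off the length-$n$ prefix shows $g(v) = w$.

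For $(1) \Rightarrow (4)$, let $\nu$ be any $G$-invariant Borel probability measure on $\partial T_d$. Level-transitivity forces $\nu([v]) = \nu([w])$ for all vertices $v, w$ on the same level, so $\nu([v]) = d^{-n}$ for $|v| = n$, and $\nu$ agrees with $\mu$ on the algebra of cylinder sets; by standard measure-theoretic extension, $\nu = \mu$. The implication $(4) \Rightarrow (3)$ is standard: the unique invariant measure of a uniquely ergodic system is always ergodic, for otherwise a nontrivial invariant Borel set $A$ with $0 < \mu(A) < 1$ would allow us to form two distinct invariant measures, the conditionals $\mu(\,\cdot\, \cap A)/\mu(A)$ and $\mu(\,\cdot\, \cap A^c)/\mu(A^c)$. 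Finally, for $(3) \Rightarrow (1)$, if $G$ fails to act transitively on the $n$-th level, the union $A$ of cylinders $[v]$ over a single $G$-orbit of vertices on that level is $G$-invariant, clopen, and has measure strictly between $0$ and $1$, contradicting ergodicity.

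The main subtle point is $(1) \Rightarrow (4)$: one must argue that equality on the cylinder algebra implies equality as measures on the full Borel $\sigma$-algebra, which is the usual uniqueness of extension from a generating algebra of a compact totally disconnected space. An alternative route avoiding unique ergodicity proves $(1) \Rightarrow (3)$ directly by showing that for any $G$-invariant Borel set $A$ and any cylinder $[v]$ on level $n$, the equality $\mu(A \cap [v]) = d^{-n}\mu(A)$ holds (again by level-transitivity moving $[v]$ to all other level-$n$ cylinders); then the Lebesgue density theorem applied along the nested sequence of cylinders shrinking to a point forces $\mathbf{1}_A(\xi) = \mu(A)$ for $\mu$-a.e. $\xi$, whence $\mu(A) \in \{0,1\}$. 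Either route completes the cycle.
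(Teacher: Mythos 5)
Your argument is correct and complete; note, however, that the paper itself offers no proof of this proposition --- it is quoted from the cited reference --- so there is no in-paper argument to compare against. Your cycle $(1)\Rightarrow(2)\Rightarrow(1)$ together with $(1)\Rightarrow(4)\Rightarrow(3)\Rightarrow(1)$ does establish all four equivalences, and each step is sound: the clopen cylinder basis handles the topological equivalence $(1)\Leftrightarrow(2)$; the computation $\nu([v])=d^{-n}$ for every invariant $\nu$ plus uniqueness of extension from the cylinder algebra gives $(1)\Rightarrow(4)$; conditioning on a nontrivial invariant set gives $(4)\Rightarrow(3)$; and the orbit of a level-$n$ vertex produces the required invariant clopen set for $(3)\Rightarrow(1)$. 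Two small points worth making explicit if you write this up: in $(4)\Rightarrow(3)$, if ergodicity is taken with respect to sets invariant only modulo null sets, you should invoke the standard fact (valid for countable groups) that such a set agrees with a strictly invariant set up to measure zero before forming the conditional measures; and in your alternative direct proof of $(1)\Rightarrow(3)$, the identity $\mu(A\cap[v])=\mu(A\cap[w])$ uses that the uniform Bernoulli measure is invariant under all of $Aut(T_d)$, not just under $G$, so that $g$ carries $A\cap[v]$ to $A\cap[w]$ measure-preservingly --- this is true but deserves a sentence. The martingale/Lebesgue density step at the end of that alternative route is the cleanest way to finish and is correctly deployed.
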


An action of weakly branch type on $T$ gives a totally non-free action on the boundary $\partial T$.

\begin{prop}\cite{MR1899368,MR2893544}\label{injectivestabilizer}
 Let $G\le Aut(T)$ be  weakly branch. Then the map $St: \partial T \to S(G)$ 
given by  $\xi \mapsto St_G(\xi)$ is injective.
\end{prop}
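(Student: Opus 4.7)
The plan is to suppose, for contradiction, that $St_G(\xi)=St_G(\eta)$ for two distinct rays $\xi\neq\eta\in\partial T$, and to produce an element of $G$ fixing one but not the other. Let $n$ be the length of the longest common prefix of $\xi$ and $\eta$, and set $v=\eta_{n+1}$, the vertex of $\eta$ at level $n+1$; since $\xi_{n+1}\neq\eta_{n+1}$, the subtree $T_v$ is disjoint from the ray $\xi$. Every element of $Rist_G(v)$ is supported in $T_v$, so it fixes $\xi$; thus $Rist_G(v)\le St_G(\xi)$, and by the weakly branch assumption $Rist_G(v)\neq 1$. It therefore suffices to show that some element of $Rist_G(v)$ moves $\eta$.

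The key structural input is that level-transitivity of $G$ on $T$ upgrades to level-transitivity of $St_G(v)$ on the subtree $T_v$. Indeed, if $u_1,u_2$ are vertices of $T_v$ at the same level of $T$ and $g\in G$ satisfies $g(u_1)=u_2$, then $g$ sends the unique ancestor of $u_1$ at level $|v|$ (namely $v$) to that of $u_2$ (again $v$), so $g\in St_G(v)$. Hence the image $H:=\sigma_v(St_G(v))\le Aut(T_v)$ of the restriction homomorphism $\sigma_v$ acts level-transitively on $T_v$; in particular every $H$-orbit on $\partial T_v$ is dense. Moreover, $Rist_G(v)$ is normal in $St_G(v)$ — it consists exactly of those elements of $St_G(v)$ that fix every vertex outside $T_v$, a condition preserved under conjugation by $St_G(v)$ — so $N:=\sigma_v(Rist_G(v))$ is a nontrivial normal subgroup of $H$.

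Now suppose for contradiction that $Rist_G(v)\le St_G(\eta)$. Let $\eta'=(v,\eta_{n+2},\eta_{n+3},\ldots)\in\partial T_v$ denote the tail of $\eta$ in $T_v$; then $N$ fixes $\eta'$. By normality, $N$ fixes every translate $h\cdot\eta'$, $h\in H$, which is a dense subset of $\partial T_v$. Continuity of the action of $Aut(T_v)$ on $\partial T_v$ then forces $N$ to fix every boundary point, and since $Aut(T_v)$ acts faithfully on $\partial T_v$ (any nontrivial tree automorphism moves some vertex $w$ and hence every ray through $w$), we conclude $N=1$. But $\sigma_v$ restricted to $Rist_G(v)$ is injective — its kernel would consist of elements of $G$ supported in $T_v$ yet acting trivially on $T_v$, hence trivial on all of $T$ — so $Rist_G(v)=1$, contradicting weakly branch. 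Therefore some $g\in Rist_G(v)$ satisfies $g(\eta)\neq\eta$, and this $g\in St_G(\xi)\setminus St_G(\eta)$.

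The delicate step is the passage from $N$ fixing the single point $\eta'$ to $N$ fixing a dense subset of $\partial T_v$; this rests on the normality of $Rist_G(v)$ inside the level-transitive overgroup $St_G(v)$, without which the orbit-spreading argument fails. Everything else is routine bookkeeping about rigid stabilizers and the faithfulness of the boundary action.
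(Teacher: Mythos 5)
Your proof is correct, but it takes a genuinely different route from the paper's. The paper works with the rigid stabilizer of a prefix $u$ of $\xi$ and produces an explicit witness: it picks a nontrivial $g\in Rist_G(u)$ (which fixes a whole neighbourhood of $\eta$), notes that $g$ moves some vertex $uu_1$ of $T_u$ to $uu_2$, and then, if $g$ does not already move a vertex on the ray $\xi$, conjugates $g$ by an element $h$ (supplied by level transitivity, and automatically lying in $St_G(u)$) so as to push the support onto $\xi$; this requires a small case analysis on where the prefix of $\xi$ sits relative to $u_1,u_2$. You instead run a soft fixed-point argument: $Rist_G(v)$ is a nontrivial normal subgroup of $St_G(v)$, whose restriction to $T_v$ acts level-transitively and hence minimally on $\partial T_v$, so if $Rist_G(v)$ fixed $\eta$ it would fix the dense orbit of the tail of $\eta$, hence (by continuity and faithfulness of the boundary action) restrict trivially to $T_v$, forcing $Rist_G(v)=1$. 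Both arguments rest on exactly the same hypotheses (level transitivity plus nontriviality of rigid stabilizers, the latter needing a silent conjugation to reach the particular vertex $v$, which you, like the paper, leave implicit); the paper's version is more hands-on and yields a concrete element separating the two stabilizers, while yours trades the case analysis for the standard principle that a nontrivial normal subgroup of a group acting minimally and faithfully on a Hausdorff space cannot fix a point. Your observation that normality of $Rist_G(v)$ in $St_G(v)$ is the crux of the orbit-spreading step is accurate and is indeed the point where a naive density argument without normality would fail.
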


\begin{proof}
 Let $\xi,\eta \in \partial T$ be distinct elements and let $u,v$ be distinct prefixes of $\xi$ and $\eta$ respectively, of same length, say, $n$. Infinite sequences starting with $u$ (respectively with $v$) form a neighbourhood of $\xi$ (respectively of $\eta$) in $\partial T$. We will show that the stabilizer of the neighbourhood of $\eta$ (which is a subgroup in $St_G(\eta)$) is not contained in $St_G(\xi)$.
 
 Let $u$ and $v$ be 
 distinct prefixes of length $n$ of $\xi$ and $\eta$ respectively. 
 Let $g\in Rist_G(u)$ be nontrivial. Since $v$ is not contained in the subtree $T_u$,
 $g$ fixes every infinite sequence starting with $v$. 
  Since $g$ is nontrivial it moves some vertex in $uu_1\in T_u$, say $g(uu_1)=uu_2$ for some
  $u_1\neq u_2$ of lengths $m$. Let $uu'$ be the prefix of $\xi$ of length $n+m$.
  
 If $u'=u$ or $u'=u_2$, then $g(uu')\neq uu'$ and hence $g\notin St_G(\xi)$.
 If both $u'\neq u_1$ and $u'\neq u_2$, by level transitivity let $h\in G$ such that $h(uu_1)=uu'$.
 Then $$(hgh^{-1})(uu')=(hg)(uu_1)=h(uu_2)\neq uu' $$ because $u_1\neq u_2$. Therefore 
 $hgh^{-1}\notin St_G(\xi)$. Since $h(uu_1)=uu_2$, we have $h\in St_G(u)$ and hence
  $hgh^{-1}\in Rist_G(u)$. It follows that $hgh^{-1}\in St^\circ_G(\eta)$.
\end{proof}

As explained in Introduction, this readily provides us with a continuous ergodic $IRS$ on $G$. See for example \cite{DDMN} for a detailed study of this and related measures on the space of Schreier graphs of the Basilica group.

Regarding the action of the Grigorchuk groups $G_\om,\om\in \Om$
on the boundary $\partial T_2$ of the binary tree we obtain the following. 
\begin{prop}\label{ergodicityofGom}
 For $\om \in \Om$ the action of $G_\om$ on $T_2$ is level transitive and hence the action of
 $G_\om$ on $(\partial T_2,\mu)$ is ergodic. Therefore, the induced $IRS$ on $G_\om$ 
 is continuous and ergodic.
\end{prop}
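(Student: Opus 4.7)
The plan is to treat the three claims in the statement in order: first level transitivity of $G_\om$ on $T_2$, then ergodicity on $(\partial T_2,\mu)$, and finally the properties of the pushforward IRS. The middle step is a direct citation of Proposition \ref{actiononboundary}, so the real work concentrates in the first and third steps.

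For level transitivity, I would argue by induction on the level $n$, with the stronger statement: for every $\om \in \Om$ and every $n \geq 1$, $G_\om$ acts transitively on level $n$ of $T_2$. The base $n=1$ is immediate since $a$ swaps $0$ and $1$. For the inductive step, I would exploit self-similarity of the family $\{G_\om\}$. Direct computation from the recursive definitions shows that $b_\om, c_\om, d_\om \in St_{G_\om}(1)$ have sections at the vertex $0$ equal to $\beta(\om_1), \zeta(\om_1), \delta(\om_1)$, and an inspection of the table defining $\beta, \zeta, \delta$ shows that for each of the three possible values of $\om_1$, at least two of these three symbols are $a$. Similarly, the elements $aba, aca, ada \in St_{G_\om}(1)$ have sections at $0$ equal to $b_{\tau\om}, c_{\tau\om}, d_{\tau\om}$. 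Collecting these, the section map $\pi_0 : St_{G_\om}(1) \to \mathrm{Aut}(T_2)$ at the vertex $0$ maps surjectively onto $G_{\tau\om}$. Applying the inductive hypothesis to $G_{\tau\om}$ acting on level $n$, any two vertices of the form $0u, 0v$ with $|u|=|v|=n$ can be exchanged by a lift in $St_{G_\om}(1)$. Post-composing with $a$ to handle pairs of the form $(0u,1v)$ or $(1u,1v)$ gives transitivity on level $n+1$, completing the induction.

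Ergodicity of $G_\om$ on $(\partial T_2,\mu)$ is then a direct application of Proposition \ref{actiononboundary} with $d=2$. For the induced IRS, I would apply Proposition \ref{prop1} to the measure-preserving action of $G_\om$ on $(\partial T_2,\mu)$: the stabilizer map $St : \partial T_2 \to S(G_\om)$ sending $\xi \mapsto St_{G_\om}(\xi)$ is Borel, and $\nu := St_\ast \mu$ is an IRS on $G_\om$. Ergodicity of $\nu$ follows from ergodicity of $\mu$ together with the identity $St(g\xi) = g\, St(\xi) g^{-1}$: for any conjugation-invariant Borel $C \subset S(G_\om)$, the preimage $St^{-1}(C)$ is $G_\om$-invariant in $\partial T_2$, hence has $\mu$-measure $0$ or $1$. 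Continuity (absence of atoms) is where the branch structure enters: for $\om \in \Om_\infty$ the group $G_\om$ is a branch group by the corollary in Section~4.2, hence weakly branch, so Proposition \ref{injectivestabilizer} guarantees that $St$ is injective. Since $\mu$ is the uniform Bernoulli measure on the Cantor set $\partial T_2$ and therefore atomless, the injectivity of $St$ forces $\nu$ to be atomless as well.

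The main obstacle I anticipate is the surjectivity statement $\pi_0(St_{G_\om}(1)) = G_{\tau\om}$ used in the inductive step. Once one chooses the correct generating set $\{b_\om, c_\om, d_\om, aba, aca, ada\}$ of $St_{G_\om}(1)$ and records the case analysis verifying that $a$ always appears among $\beta(\om_1), \zeta(\om_1), \delta(\om_1)$, the rest is a straightforward assembly of Propositions \ref{prop1}, \ref{actiononboundary}, and \ref{injectivestabilizer} applied to the self-similar branch action of $G_\om$ on the binary rooted tree.
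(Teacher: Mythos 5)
Your proof is correct and follows essentially the same route as the paper's, whose entire argument consists of the three citations you make (Propositions \ref{prop1}, \ref{actiononboundary} and \ref{injectivestabilizer}); the paper does not prove level transitivity at all, which you supply by the standard self-replication induction (the section map of $St_{G_\om}(1)$ at the vertex $0$ surjects onto $G_{\tau\om}$), nor does it spell out the transfer of ergodicity, which you do correctly via $St_{G_\om}(g\xi)=g\,St_{G_\om}(\xi)g^{-1}$.

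One point worth recording: your continuity argument requires $G_\om$ to be weakly branch and you invoke it only for $\om\in\Om_\infty$, whereas the proposition is asserted for all $\om\in\Om$. This restriction is in fact unavoidable rather than a gap on your side: for eventually constant $\om$ the group $G_\om$ is virtually $\f{Z}^{2^n}$, hence virtually polycyclic with only countably many subgroups, so it admits no continuous $IRS$ whatsoever and the continuity claim fails for such $\om$. (The same argument as yours does go through for every non--eventually-constant $\om$, since $G_\om'''$ is then nontrivial and $G_\om$ is weakly branch.) The paper's statement is imprecise on this point; since the main theorem only uses $\Lambda\subset\Om_\infty$, your version is the one actually needed.
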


\begin{proof}
 By  Proposition 
 \ref{prop1} the action of $G_\om$ on $(\partial T_2,\mu)$ induces an $IRS$ on $G_\om$.
 This $IRS$ will be continuous by Proposition \ref{injectivestabilizer} and ergodic by
 Proposition \ref{actiononboundary}.
\end{proof}

\subsection{IRS on universal groups} \mbox{}

\medskip

Given $\om_1,\om_2 \in \Om$, let us write $\om_1 \sim \om_2$ if there exists 
$\sigma \in Sym(\{0,1,2\})$ such that $\om_2$ is obtained from $\om_1$ by application of $\sigma$ to each letter
of $\om_1$. Recall that by Theorem \ref{grig84} part (7) we have 
that for $\om_1,\om_2 \in \Om_\infty$, $G_{\om_1}\cong G_{\om_2}$ if and only if $\om_1\sim \om_2$.

For a subset  $\Lambda\subset \Omega$  let $|\Lambda|_{\sim}$ denote the cardinality of
the set of $\sim$ equivalence classes in $\Lambda$.

\begin{prop}
 For $\Lambda \subset \Om_\infty$, $U_\Lambda$ has at least $|\Lambda|_{\sim}$ distinct
 continuous, ergodic  invariant
 random subgroups.
\end{prop}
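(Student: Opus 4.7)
The plan is to associate, to each $\om\in\Lambda$, a continuous ergodic $IRS$ on $U_\Lambda$ obtained by lifting the boundary-action $IRS$ on the quotient $G_\om$, and then to show that $\om_1\not\sim\om_2$ forces the two lifts to be distinct probability measures on $S(U_\Lambda)$.

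First I would, for each $\om\in\Lambda$, invoke Proposition~\ref{ergodicityofGom} to produce a continuous ergodic $IRS$ $\nu_\om$ on $G_\om$, namely the pushforward of the uniform Bernoulli measure $\mu$ on $\partial T_2$ under the stabilizer map $\xi\mapsto St_{G_\om}(\xi)$. By the universal property of $U_\Lambda$, there is a marked surjection $\psi_\om:U_\Lambda\twoheadrightarrow G_\om$, so Corollary~\ref{cor1} applies and yields a continuous ergodic $IRS$ $\nu'_\om:=\widetilde{\psi_\om}_\ast(\nu_\om)$ on $U_\Lambda$, whose support is contained in $S(U_\Lambda,\ker\psi_\om)$. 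This produces one continuous ergodic $IRS$ on $U_\Lambda$ for each $\om\in\Lambda$.

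The main step, and the main obstacle, is to separate these lifts. Fix $\om_1,\om_2\in\Lambda$ with $\om_1\not\sim\om_2$. By Theorem~\ref{grig84}(7), $G_{\om_1}$ and $G_{\om_2}$ are non-isomorphic as abstract groups; since both are marked quotients of $U_\Lambda$, this forces $\ker\psi_{\om_1}\neq\ker\psi_{\om_2}$. Choose, after possibly swapping indices, an element $g\in\ker\psi_{\om_2}\setminus\ker\psi_{\om_1}$. Every subgroup in the support of $\nu'_{\om_2}$ contains $\ker\psi_{\om_2}\ni g$, so $\nu'_{\om_2}(\{H:g\in H\})=1$. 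On the other hand, $\psi_{\om_1}(g)$ is a nontrivial element of $G_{\om_1}\le\mathrm{Aut}(T_2)$, so it moves some vertex $v\in\{0,1\}^\ast$; then the cylinder $C_v\subset\partial T_2$ of rays extending $v$ is disjoint from its $\psi_{\om_1}(g)$-image, and in particular from $\mathrm{Fix}(\psi_{\om_1}(g))$. Unwinding definitions,
\[\nu'_{\om_1}(\{H:g\in H\})=\nu_{\om_1}(\{K:\psi_{\om_1}(g)\in K\})=\mu\bigl(\mathrm{Fix}(\psi_{\om_1}(g))\bigr)\le 1-2^{-|v|}<1,\]
so $\nu'_{\om_1}\neq\nu'_{\om_2}$.

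Picking one representative from each $\sim$-class of $\Lambda$ then produces $|\Lambda|_\sim$ pairwise distinct continuous ergodic $IRS$ on $U_\Lambda$, as desired. The crux is the separation step, which rests on two ingredients that are automatic here: distinct abstract isomorphism classes of the $G_\om$ (Theorem~\ref{grig84}(7)) forcing distinct kernels in $U_\Lambda$, and faithfulness of the action of $G_\om$ on $T_2$ ensuring that no nontrivial element can fix a $\mu$-conull subset of $\partial T_2$.
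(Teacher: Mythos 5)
Your proposal is correct, and the construction of the measures $\nu'_\om$ is exactly the paper's (lift the boundary-stabilizer $IRS$ on $G_\om$ through the marked surjection via Corollary \ref{cor1}). Where you diverge is in the separation step, and your route is genuinely different. The paper argues by contradiction at the level of supports: if some $L_{\om,\xi}=\varphi_\om^{-1}(St_{G_\om}(\xi))$ equalled some $L_{\eta,\rho}$ with $\om\nsim\eta$, that subgroup would contain $N_\om N_\eta\supsetneq N_\om$, hence by just-infiniteness of $G_\om$ it would have finite index in $U_\Lambda$, contradicting the fact that $St_{G_\om}(\xi)$ has infinite index; this yields the stronger conclusion that the two lifted measures have \emph{disjoint} supports. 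You instead exhibit a clopen set $\{H : g\in H\}$, with $g\in\ker\psi_{\om_2}\setminus\ker\psi_{\om_1}$, on which the two measures disagree: it has full measure for $\nu'_{\om_2}$, while for $\nu'_{\om_1}$ its measure equals $\mu(\mathrm{Fix}(\psi_{\om_1}(g)))\le 1-2^{-|v|}<1$ because the nontrivial automorphism $\psi_{\om_1}(g)$ displaces an entire cylinder. This is more elementary at this stage --- it uses only faithfulness of the $G_\om$-action on $T_2$ and the distinctness of the kernels (which, as you note, follows from Theorem \ref{grig84}(7)), not just-infiniteness --- and it would apply verbatim to any family of faithful level-transitive tree actions with pairwise distinct kernels. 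What it buys less of is structural information: the paper's argument shows the $2^{\aleph_0}$ ergodic measures are mutually singular with disjoint supports, whereas yours a priori only shows they are pairwise distinct (though mutual singularity of distinct ergodic measures could then be invoked separately). Both arguments are complete and correct.
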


\begin{proof}
 Fix $\Lambda \subset \Om_\infty$. Let $\varphi_\om : U_\Lambda \lr G_\om$ be the canonical 
 surjection and let $N_\om=Ker(\varphi_\om)$. Note that if $\om\nsim \eta$, then
 by Theorem \ref{grig84} part (7) and the fact that $G_\eta$ is just infinite, 
 we have $N_\eta \nleq N_\om$. For $\om \in \Om$ and $\xi \in \partial T_2$
 let $W_{\om,\xi}=St_{G_\om}(\xi)$. 
 By Proposition \ref{ergodicityofGom}, 
 the canonical action of $G_\om$ onto $(\partial T_2, \mu)$ induces a continuous, ergodic $IRS$ $\mu_\om$ on $G_\om$.
 Moreover, $\mu_\om$ is supported on $\{W_{\om,\xi} \mid \xi \in \partial T_2\}$.

  Let $\nu_\om$ denote the induced $IRS$ on $U_\Lambda$ obtained as described in Corollary \ref{cor1} 
  (i.e., $\nu_\om= (\tilde \varphi_\om)_*(\mu_\om)$). Again by Corollary \ref{cor1}, $\nu_\om$ is continuous and
  ergodic.
  Let $L_{\om,\xi}=\varphi^{-1}_\om(W_{\om,\xi})$
  and note that $\nu_\om$ is supported on $Y_\om=\{L_{\om,\xi}\mid \xi \in \partial T_2\}$.
  Observe that for all $\xi \in \partial T_2$, $L_{\om,\xi}$ contains $N_\om$.
  
  Suppose that for some $\om\nsim\eta \in \Lambda$  and $\xi,\rho\in \partial T_2$ 
  we have $L_{\om,\xi}=L_{\eta,\rho}$. Then $N_\om,N_\eta\leq L_{\om,\xi}$ and hence 
  $L_{\om,\xi}$ contains the subgroup $N= N_\om N_\eta  .$ Since $N_\eta \nleq N_\om$,
  $N$ contains $N_\om$ as a proper subgroup. It follows that the group $U_\Lambda/N$
  is a proper quotient of the group $U_\Lambda / N_\om \cong G_\om$. Since $G_\om$
  is a just infinite group it follows that $N$ and hence $L_{\om,\xi}$ has finite index in 
  $U_\Lambda$. This, in turn shows that $St_{G_\om}(\xi)$ has finite index in $G_\om$
  which is a contradiction.
  Therefore if $\om \nsim \eta$ we see
  that the measures $\nu_\om$ and $\nu_\eta$ have disjoint supports and are in particular
  distinct.
\end{proof}

Combining this with results from Section 5 we obtain the main theorem:

\begin{mainthm}
There is a subset $\Lambda\subset \Omega$ such that the corresponding universal group $U_\Lambda$
has intermediate growth and has $2^{\aleph_0}$ distinct continuous ergodic invariant random subgroups.
\end{mainthm}

\textbf{Acknowledgements:} 

The authors wish to thank Anatoly Vershik and Yaroslav Vorobets for useful discussions and the referee for the careful reading of the manuscript.

\bibliographystyle{alpha}
\bibliography{Universal}

\end{document}